\newtheorem{theo}{Theorem}[section]  
\newtheorem{theorem}{Theorem}
\newtheorem{example}[theo]{Example}
\newtheorem{lemma}[theo]{Lemma}
\newtheorem{remark}[theo]{Remark}
\newtheorem{corollary}[theo]{Corollary}
\newtheorem{assumption}[theo]{Assumption}
\newcommand{\beq}{\begin{equation}}
\newcommand{\eeq}{\end{equation}}
\newcommand{\beqa}{\begin{eqnarray}}
\newcommand{\eeqa}{\end{eqnarray}}
\newcommand{\beqs}{\begin{equation*}}
\newcommand{\eeqs}{\end{equation*}}
\newcommand{\beqas}{\begin{eqnarray*}}
\newcommand{\eeqas}{\end{eqnarray*}}
\newcommand{\R}{\mathbb R}
\newcommand{\N}{\mathbb N}
\newcommand{\bigO}{{\cal O}}
\renewcommand{\P}{\mathbb P}
\newcommand{\E}{\mathbb E}
\newcommand{\hlip}{U}
\newcommand{\dist}{\mbox{dist}}
\newcommand{\Gs}{{G_*}}
\newcommand{\Hs}{{H_*}}
\newcommand{\s}{\sigma}
\newcommand{\sk}{\sigma_k}
\newcommand{\vb}{v}
\newcommand{\mub}{\bar{v}}
\newcommand{\Yqk}{{Y_{q,k}}}
\newcommand{\Yonek}{{Y_{1,k}}}
\newcommand{\Iqk}{{I_{q,k}}}
\newcommand{\aqk}{{\bar{\alpha}_{q,k}}}
\newcommand{\aonek}{{\bar{\alpha}_{1,k}}}
\newcommand{\xqk}{{\bar{x}_{q,k}}}
\newcommand{\xonek}{{\bar{x}_{1,k}}}
\newcommand{\bonek}{b_{1,k}}
\newcommand{\ski}{{\sigma_k(i)}}
\newcommand{\skl}{{\sigma_k(\ell)}}
\newcommand{\aqs}{{a_{q}(s)}}
\newcommand{\xs}{{x^*}}
\newcommand{\bma}{\begin{math}}
\newcommand{\ema}{\end{math}}
\def\rev#1{{{\color{black}#1}}} 
\begin{document} 
\title{Why Random Reshuffling Beats \\Stochastic Gradient Descent
}


\author{M.~G\"urb\"uzbalaban         \and
        A.~Ozdaglar \and P.A.~Parrilo.
}


\author{Mert G\"urb\"uzbalaban\thanks{Department of Management Science and Information Systems, Rutgers University, 100 Rockafellar Road, Piscataway, NJ 08854. E-mail: \texttt{mg1366@rutgers.edu}. The author's work is partially supported by the NSF DMS-1723085 Award.}, 
Asuman Ozdaglar\thanks{Department of Electrical Engineering and Computer Science, Massachusetts Institute of Technology, 77 Massachusetts Avenue, Cambridge, Massachusetts 02139. E-mail: \texttt{asuman@mit.edu}.}, \\
Pablo Parrilo \thanks{Department of Electrical Engineering and Computer Science, Massachusetts Institute of Technology, 77 Massachusetts Avenue, Cambridge, Massachusetts 02139. E-mail: \texttt{parrilo@mit.edu}.}
}



\maketitle

\begin{abstract} We analyze the convergence rate of the random reshuffling (RR) method, which is a randomized first-order incremental algorithm for minimizing a finite sum of convex component functions. RR proceeds in cycles, picking a uniformly random order (permutation) and processing the component functions one at a time according to this order, i.e., at each cycle, each component function is sampled without replacement from the collection. Though RR has been numerically observed to outperform its with-replacement counterpart stochastic gradient descent (SGD), characterization of its convergence rate has been a long standing open question. In this paper, we answer this question by providing various convergence rate results for RR and variants when the sum function is strongly convex. We first focus on quadratic component functions and show that the expected distance of the iterates generated by RR with stepsize $\alpha_k=\Theta(1/k^s)$ for $s\in (0,1]$ converges to zero at rate $\bigO(1/k^s)$ (with $s=1$ requiring adjusting the stepsize to the strong convexity constant). Our main result shows that when the component functions are quadratics or smooth (with a Lipschitz assumption on the Hessian matrices), RR with iterate averaging and a diminishing stepsize $\alpha_k=\Theta(1/k^s)$ for $s\in (1/2,1)$ converges at rate $\Theta(1/k^{2s})$ with probability one in the suboptimality of the objective value, thus improving upon the $\Omega(1/k)$ rate of SGD. Our analysis draws on the  theory of Polyak-Ruppert averaging and relies on decoupling the dependent cycle gradient error into an independent term over cycles and another term dominated by $\alpha_k^2$. This allows us to apply law of large numbers to an appropriately weighted version of the cycle gradient errors, where the weights depend on the stepsize. We also provide high probability convergence rate estimates that shows decay rate of different terms and allows us to propose a modification of RR with convergence rate $\bigO(\frac{1}{k^2})$.  \end{abstract}

\section{Introduction: First-order incremental methods}

We consider the following unconstrained optimization problem where the objective function is the sum of a large number of component functions: 
\beq\label{pbm-multi-agent}
\min f(x) := \sum_{i=1}^m f_i(x) \quad \mbox{s.t.} \quad x \in \R^n
\eeq
with $f_i:\R^n \to \R$. This problem arises in many contexts and applications including regression or more generally parameter estimation problems (where $f_i(x)$ is the loss function representing the error between the output and the prediction of a parametric model)~\cite{Boyd2011AdmmBook,bertsekas2011incremental,bertsekas1997hybrid, Bertsekas1996incremental}, minimization of an expected value of a function (where the expectation is taken over a finite probability distribution or approximated by an $m$-sample average)~\cite{BottouLecun2005, Leroux2012sgd}, machine learning~\cite{Jordan13DistLearningApi, Dickenstein2014, Leroux2012sgd}, or distributed optimization over networks~\cite{RamNedicVeer2007, NedicOzdaglar09,Nedic2007rate}.

One widely studied approach for solving problem~\eqref{pbm-multi-agent} is the {\it deterministic incremental gradient (IG) method}~\cite{Bertsekas99nonlinear,Bertsekas15Book,bertsekas2011incremental}. IG method is similar to the standard gradient method with the key difference that at each iteration, the decision vector is updated incrementally by taking sequential steps  along the gradient of the component functions $f_i$ in a cyclic order. Hence, we can view each outer iteration $k$ as a cycle of $m$ inner iterations: starting from initial point $x_0^0 \in \R^n$, for each $k\geq 0$, we update the iterate $x_i^k$ as 
 \begin{equation}\label{eq-inner-update} x_{i}^k :={x_{i-1}^k - \alpha_k \nabla f_{i} (x_{i-1}^k)} , \qquad i=1,2,\dots,m,
   \end{equation}
where $\alpha_k>0$ is a stepsize with the convention that $x_0^{k+1} = x_{m}^k$. 


Intuitively, it is clear that slow progress can be obtained if the functions that are processed consecutively have gradients close to zero. Indeed, the performance of IG is known to be pretty sensitive to the order functions are processed~\cite[Example 2.1.3]{Bertsekas15Book}.  If there is a favorable order $\sigma$ (defined as a permutation of $\{1,2,\dots,m\}$) that can be obtained by exploiting problem-specific knowledge, the method can be updated to process the functions with this order instead with the iterations:
	 \begin{equation}\label{eq-inner-update-perm} x_{i}^k :={x_{i-1}^k - \alpha_k \nabla f_{\sigma(i)} (x_{i-1}^k)} , \qquad i=1,2,\dots,m.
   \end{equation}
However, in general a favorable order is not known in advance, and a common approach is choosing the indices of functions to process as independent and uniformly distributed samples from the set $\{1,2,\dots,m\}$. This way no particular order is favored, making the method less vulnerable to particularly bad orders. This approach amounts to at each iteration sampling the function indices \textit{with replacement} from the set $\{1,2,\dots,m\}$ and is called the \textit{Stochastic Gradient Descent} (SGD) method, a.k.a. \textit{Robbins-Monro} algorithm~\cite{Robbins:1951ua}. SGD is strongly related to the classical field of stochastic approximation~\cite{kushner2003stochastic}. Recently it has received a lot of attention due to its applicability to large-scale problems and became popular especially in machine learning applications (see e.g.~\cite{Bottou:2012hg, Borges2010BookChapter,BottouLecun2005,Zhang2004}).

An alternative popular approach that works well in practice is following a mixed approach between SGD and IG, sampling the functions randomly but not allowing repetitions, that is sampling the component functions at each iteration \textit{without-replacement}, or equivalently picking a random order at each cycle. Specifically, at each cycle $k$, we draw a permutation $\sigma_k $ of $\{1,2,\dots,m\}$ independently and uniformly at random over the set of all permutations \beq \Gamma = \big\{ \sigma ~:~ \sigma \mbox{ is a permutation of } \{1,2,\dots,m\} \big\} \label{def-Gamma-perm-set}\eeq 
and process the functions with this order: 
\begin{equation}\label{inner-update-wo} x_{i}^k :={x_{i-1}^k - \alpha_k \nabla f_{\sigma_k(i)} (x_{i-1}^k)} , \qquad i=1,2,\dots,m,
   \end{equation} 
 where $\alpha_k>0$ is a stepsize. We set $x_0^{k+1} = x_{m}^k$ as before and refer to $\{x_0^k\}$ as the {\it outer iterates}. This method is called the \textit{Random Reshuffling} (RR) method~\cite[Section 2.1]{Bertsekas15Book} and will be the focus of this paper.


\section{Motivation and summary of contributions} 
Without-replacement sampling schemes are often easier to implement efficiently compared to with-replacement sampling schemes, guarantee that every point in the data set is touched at least once, and often have better practical performance than their with-replacement counterparts~\cite{Bottou:2012hg, bottou2009curiously,recht2013parallel,
feng2012towards, recht12jmlr,BottouSgdTricks}. For instance, Bottou~\cite{bottou2009curiously} empirically compares SGD and RR methods and finds that RR converges with a rate close to $\sim 1/k^2$ whereas SGD is much slower achieving its min-max lower bound of $\Omega(1/k)$ for strongly convex objective functions~\cite{yudin1983problem,Agarwal12}. Many other papers listed above report a similar empirical behavior. This discrepancy in rate between RR and SGD is not only observed for large $m$ but also for small $m$ (as we illustrate in Example~\ref{exam-one}), and understanding it theoretically has been a long-standing open problem~\cite{recht2013parallel,Bertsekas99nonlinear}. 
 
To our knowledge, the only existing theoretical analysis for RR is given by a recent paper of Recht and R\'e~\cite{recht12jmlr} which focuses on least mean squared optimization and formulates a conjecture that would prove that the expected convergence rate of RR is faster than that of SGD. Given $N$ arbitrary positive-definite matrices of dimension $n\times n$, the conjecture says that products of any $K$ matrices chosen from this set of $N$ matrices satisfy a non-commutative arithmetic-geometric mean inequality for every positive integer $N$ and every $K\leq N$. This conjecture has been proven only in some special cases (for $N=2$~\cite{recht12jmlr}, for $N=3$~\cite{Ward16AMGM} and when $N$ is a multiple of 3 and $K=3$~\cite{Zhang14AMGM}). Recht and R\'e also analyze a special case of \eqref{pbm-multi-agent} (that arises when $f_i(x) = (a_i^T x - y_i)^2$ is a quadratic function where $a_i$ is a column vector that is randomly generated according to a random model and $y_i$ is a scalar) and show that after a fixed amount of iterations, the upper bounds on the expected mean square error using without-replacement sampling is smaller than that of with-replacement sampling with high probability on most models of $a_i$ (probabilities are taken with respect to the random data generation model). Despite these advances, there has been a lack of convergence theory for RR that characterizes its convergence rate and explains its fast performance. Analyzing algorithms based on without-replacement sampling such as RR is more difficult than with-replacement based approaches such as SGD. The reason is that the underlying independence assumption for the with-replacement sampling allows a tractable analysis with classical martingale convergence theory~\cite{Moulines:2011vy, PolyakJuditsky92}, whereas without-replacement sampling introduces correlations and dependencies among the sampled gradients and iterates that are harder to analyze~\cite{recht12jmlr}. The aim of our paper is to {\it fill this theoretical gap} for the case when the objective function $f$ in \eqref{pbm-multi-agent} is strongly convex and {\it develop a novel algorithm that can accelerate the convergence further}. We next summarize our contributions. 

We first consider the case when the component functions are quadratics. Building on the recent convergence rate results for the cyclic IG in~\cite{Gur2015IncGrad}, we first present a key result (Theorem~\ref{theo-rate-quadratics}) that provides an upper bound for the distance from the optimal solution of the iterates generated by an incremental method that processes component functions with an {\it arbitrary fixed order} and uses a stepsize $\Theta(1/k^s)$ for $s\in (0,1]$. This upper bound decays at rate $\bigO(1/k^s)$ and depends on the strong convexity constant of the sum function and an order dependent parameter given by a weighted average of Hessian matrices where the weights are given by the sum of the component gradients processed up to that point according to the given order. We use this result to show that the distance to the optimal solution of the iterates generated by RR algorithm with stepsize $\Theta(1/k^s)$, for all $s\in (0,1]$, converges to 0 at rate $\bigO(1/k^s)$ in expectation (where the expectation is over the random sequence of iterates). However, we show that achieving the rate $\bigO(1/k)$ involves adapting the stepsize to the strong convexity constant of the sum function.


We then consider the $q$-suffix averages of the iterates generated by RR for some $q\in (0,1]$ (which is obtained by averaging the last $qk$ iterates at iteration $k$) and show that with a stepsize  $\alpha_k = R/(k+1)^s$ for $s\in (1/2,1)$ and $R>0$, they converge {\it almost surely at rate $\bigO(1/k^s)$ to the optimal solution}. We provide an explicit characterization of the asymptotic rate constant in terms of the averaging parameter $q$, the stepsize parameters $R$ and $s$ and the Hessian matrices and the gradients of the component functions at the optimal solution (parts $(i)$ and $(ii)$ of Theorem~\ref{thm-averaged-rate-quad}). Using strong convexity, this implies an almost sure convergence rate $\Theta(1/k^{2s})$ in the suboptimality of the objective value. Our analysis views RR as a gradient descent method with random gradient errors. The analysis of RR is complicated by the fact that the cumulative gradient error over cycles are dependent. A key step in our proof is to decouple the cycle gradient error into a $\bigO(\alpha_k)$ term independent over cycles and another term that scales as $\bigO(\alpha_k^2)$. This allows us to use strong law of large numbers for a properly weighted average of the cycle error gradient sequence (where the weights depend on the stepsize) and show almost sure convergence of the $q$-suffix averaged iterates. Another key component of our analysis is to adapt the Polyak-Ruppert averaging techniques developed for SGD~\cite{PolyakJuditsky92,Moulines:2011vy} to RR.  

We also provide a high probability convergence rate estimate for the distance of $q$-suffix averages to the optimal solution that consists of two terms, with the first term corresponding to a $1/k^s$ decay of a ``bias" term (where bias is defined as the expected value of the cycle gradient errors of RR which may be non-zero) and the second term representing a $1/k$ decay for $0<q<1$ (and $\log k/k$ decay for $q=1$); see part $(iii)$ of Theorem~\ref{thm-averaged-rate-quad} . These results are obtained by martingale concentration techniques. We use the characterization of the bias to estimate it with a term that can be computed during the RR iterations. We show that subtracting the estimated bias from the averaged RR iterates accelerates the convergence rate further, leaving only the second error term of $1/k$ decay in the iterates (part $(iv)$ of Theorem~\ref{thm-averaged-rate-quad}). Based on this result, we propose a new algorithm which we call the \textit{De-biased Random Reshuffling} (DRR) method that can accelerate the asymptotic convergence rate of RR in the suboptimality of the function values from $\bigO(1/k^{2s})$ to $\bigO(1/k^2)$. 

Finally, in Theorem~\ref{thm-averaged-random-rate} we show that our results in Theorem \ref{thm-averaged-rate-quad} extend to the more general case when component functions are smooth (twice continuously differentiable) under a Lipschitz assumption on the Hessian, which allows us to control the second order term in a Taylor expansion of the gradient. 

\smallskip
\noindent{\textbf{Outline:}} The outline of the paper is as follows. In Section \ref{sec-prelim}, we introduce our approach for analyzing RR, present Polyak-Ruppert averaging and give a motivating example. Section~\ref{sec:quad-case} focuses on the case when component functions are quadratics. We first present a convergence rate estimate for IG with a fixed arbitrary order. We then focus on RR and study convergence of averaged iterates to the optimal solution. Section~\ref{sec:general-case} extends our results to smooth functions. Section~\ref{sec:accelerated-RR} proposes the DRR algorithm that can accelerate RR further. Finally, we conclude with a summary of our work in Section~\ref{sec:conclusion}. Some of the technical lemmas required in the details of the proofs are deferred to Sections~\ref{sec-appendix-exp-rate}, \ref{sec:appendix-quad-case} and~\ref{sec:appendix-general-case} of the Appendix.

\noindent \textbf{Notation}: We study the point-wise dominance of stochastic sequences by deterministic sequences and use the following notation. Let $x_k = x_k(\omega)$ be a stochastic real-valued sequence (where $\omega$ can be thought as the source of randomness) and $y_k$ be a real-valued deterministic sequence. We write $ x_k = \bigO(y_k)  \iff \exists h>0, \exists k_0 \quad \mbox{such that} \quad  |x_k| \leq  h |y_k| \quad \forall k\geq k_0, \forall \omega,$ where $h$ and $k_0$ are independent of $\omega$ (Note that the requirement is that this inequality holds for all $\omega$, not just for almost all $\omega$). When $x_k$ is non-negative for every $\omega$, given another deterministic positive sequence $z_k$, we also introduce the inequality version of this definition: 
	$x_k \leq y_k + o(z_k) \iff \forall \varepsilon>0, \exists k_0(\varepsilon) \quad \mbox{such that}\	\quad z_k^{-1} |x_k(\omega) - y_k| \leq \varepsilon, \quad \forall k\geq k_0(\varepsilon), \forall \omega$
where $k_0$ depends on $\varepsilon$ but is independent of $\omega$.	When $x_k$ is deterministic, these definitions reduce to the standard definitions of $\bigO(\cdot)$ and $o(\cdot)$ for deterministic sequences. For random $x_k$, the only difference is that we require the constants to be independent of the choice of $\omega$. For example, if $x_k$ is uniformly distributed over $[0,10]$, we write $x_k = \bigO(1)$. Throughout the paper, $\| \cdot \|$ denotes the vector or matrix 2-norm (maximum singular value). 

\section{Preliminaries}\label{sec-prelim}
We consider solving problem (\ref{pbm-multi-agent}) with RR method with iterations given in (\ref{inner-update-wo}). Throughout we assume the following:  
\begin{assumption}\label{assump-sum-is-str-cvx} The sum function $f(x)=\sum_{i=1}^m f_i(x)$ is strongly convex, i.e., there exists a constant $c>0$ such that the function $f(x) - \frac{c}{2} \| x\| ^2$ is convex on $\R^n$.\footnote{Such functions arise naturally in support vector machines and other regularized learning algorithms or regression problems (see e.g. \cite{Leroux2012sgd, Rakhlin:2011, Hogwild11})}.
\end{assumption}

Note that this assumption is on the sum function $f$, it does not require the convexity of the individual component functions $f_i$. A consequence of this assumption is that there exists a unique optimal solution to \eqref{pbm-multi-agent} which we denote by $x^*$. Another consequence is that the Hessian at the optimal solution is invertible since
	 \beq \Hs := \nabla^2 f(x^*) \succeq c I_n \succ 0
		\label{def-Hs}	 
	 \eeq 
where $I_n$	is the $n\times n$ identity matrix.

To analyze RR, we view it as a gradient method with random gradient errors and rewrite 
the outer iterations \eqref{inner-update-wo} as 
   \beq \frac{x_0^k - x_{0}^{k+1}}{\alpha_k} = \nabla f(x_0^k) + E_k \label{averaged-iter-randomized}
   \eeq
where 
   \beq E_k := \sum_{i=1}^{m} \bigg( \nabla f_{\ski}(x_{i-1}^k) -  \nabla f_{\ski} (x_0^k) \bigg) \label{def-Ek}
   \eeq
is the cumulative gradient errors associated with the cycle $k$. 
This approach is similar to the analysis of SGD, where one writes each (inner) iteration as a gradient method with error. The key difference that simplifies the analysis of SGD is the fact that the iteration gradient errors at the current iterate are independent (because of independent identically distributed sampling of component function indices) allowing use of martingale central limit theorems to obtain convergence and rate results (see e.g. \cite{kushner2003stochastic, Fabian:1968cc, Chung:1954iy,PolyakJuditsky92}). In contrast, for RR, not only  are the iteration gradient errors dependent (because of sampling a random order at cycle $k$ coupling indices $\sigma_k(i)$ and $\sigma_k(j)$ for $i\ne j$), but also the cycle gradient errors $E_{k_1}$ and $E_{k_2}$ for cycles $k_1\ne k_2$ are dependent as they both depend on the history of the iterates. This necessitates a different line of attack for the convergence analysis of RR.\footnote{There is some literature that analyzes SGD under correlated noise \cite[Ch.~6]{kushner2003stochastic}, but the noise needs to have a special structure (such as a mixing property) which does not seem to be applicable to the analysis of RR.}

A key idea in our analysis is to use a recent upper bound for the convergence rate of cyclic incremental gradient method (see  \cite{Gur2015IncGrad}), which can be generalized to hold for any fixed deterministic order. This bound implies an almost sure upper bound (in fact one that holds for all sample paths) on the distance of the outer iterates $x_0^k$ generated by RR from the optimal solution $x^*$ denoted by
	$\dist_k  =\| x_0^k - x^*\|$
(see Section \ref{quad-conv}). Crucially, this result implies an upper bound in expected distance which is asymptotically $m$ times smaller than the almost sure guarantees on the distance of the iterates.

In analyzing RR, we will also consider the \textit{average} of the outer iterate sequence given by\footnote{It is well known that computing this (moving) average can be done efficiently in a dynamic manner by storing a vector of length $n$.} 
	$ \bar{x}_{k} := \frac{\sum_{j=0}^{k-1} x_0^j}{k}. \label{simple-average}
	$
 We also consider averaging only the most recent iterates, i.e. at iteration $k$, averaging the last $qk$ iterates
for some constant $q\in (0,1]$:
	$$\bar{x}_{q,k} := \frac{\sum_{j=(1-q)k }^{k-1} x_0^j}{qk}, \quad 0 < q\leq 1. $$ 
The generated sequence is referred to as the \textit{$q$-suffix average} of the sequence $x_0^k$. For SGD, it was shown that $q$-suffix averaging with $0<q<1$ leads to better performance then averaging (which corresponds to the $q=1$ case by definition), improving the convergence rate in the suboptimality of the function value from $\log k/k$ to $1/k$~\cite{Rakhlin:2011, Shamir:2012tj}. This is in line with our results in Section~\ref{sec:quad-case} which show faster rate for the $0<q<1$ case. The parameter $q$ can be thought as a measure of how much memory one uses during the averaging process. We define the \textit{$q$-suffix average} of the stepsize in a similar way:
 \beq \bar{\alpha}_{q,k} =\frac{\sum_{j=(1-q)k}^{k-1} \alpha_j}
{qk}, \quad 0<q\leq 1.\label{def-averaged-stepsize}
\eeq

We will obtain our strongest convergence results (in the almost sure sense and with a similar $m$ dependence as the expected guarantees) for averaged iterate sequences with ``large step sizes", a technique known as Polyak-Ruppert averaging, which has been used in achieving optimal rates for SGD in a robust manner as explained next.

\subsection{Polyak-Ruppert averaging} SGD has a long history going back to the seminal paper of Robbins and Monro~\cite{Robbins:1951ua}. It has been analyzed under different assumptions extensively in the stochastic approximation literature (see e.g.~\cite{kushner2003stochastic}). 
For stochastic convex optimization, it has been shown that SGD has a min-max lower bound of $\Omega(1/k)$~\cite{yudin1983problem,Agarwal12}. One way of achieving this optimal $1/k$ rate is to use a stepsize $\alpha_k = R/k$ where $R$ is a positive scalar adjusted properly to the strong convexity constant of the objective function~\cite{Chung:1954iy,Fabian:1968cc,kushner2003stochastic} but this requires the knowledge or the estimate of an accurate lower bound on the strong convexity constant. If a lower bound is not known or cannot be estimated accurately, the convergence can be potentially slow~\cite[Section 2.1]{Nemirovski:2009kb}. Polyak-Ruppert averaging is a technique that allows to get the optimal $\sim 1/k$ rate in an asymptotically efficient manner without the need to adjust to the strong convexity constant. It relies on using a larger stepsize $\alpha_k = R/k^s$ (with $R$ an arbitrary positive constant and $s\in (1/2,1)$) that decays slower than $\Theta(1/k)$ but then taking the time average of the iterates to filter out the undesired oscillations arising due to the larger steps ~\cite{PolyakJuditsky92, Nemirovski:2009kb, kushner2003stochastic}.\footnote{IG shows similar properties to SGD in terms of the robustness of the stepsize rules $\alpha_k = R/k^s$. The convergence rate (in $k$) is only robust to the strong convexity constant of the objective for $s<1$ but not for $s=1$~\cite{Gur2015IncGrad}.} We will later show that the same technique allows us to get
almost sure guarantees for the averaged iterates without the need to tune the stepsize to the strong convexity constant (see Theorems~\ref{thm-averaged-rate-quad} and~\ref{thm-averaged-random-rate}).

\subsection{A motivating example} 
Before presenting our convergence analysis, we consider a simple example that highlights the difference in convergence mechanisms of SGD and RR and gives intuition on why RR is faster than SGD asymptotically.
\begin{example}\label{exam-one} Consider the component functions
	 \beq f_1(x) = \frac{1}{2} (x-1)^2, \quad f_2(x) = \frac{1}{2}(x+1)^2 + \frac{x^2}{2}
	 \eeq
with $f(x) = f_1(x) + f_2(x) = \frac{3}{2}x^2+1$ and $x^* = 0$. 
The outer RR iterates $\{x_0^k\}$ satisfy
       \beqa   \quad x_0^{k+1} &=& x_0^k -  {\alpha_k} \left(\nabla f_{{\sigma}_k(1)}(x_0^k) +f_{{\sigma}_k(2)}(x_1^k)\right) = x_0^k - {\alpha_k} (\nabla f(x_0^k) + E_k), 
       \label{eq-simple-component-func}
       \eeqa
where the cycle gradient errors are given by
\beq E_k = \begin{cases} 
	\nabla f_2(x_1^k) - \nabla f_2(x_0^k)  &  \quad \mbox{with probability } 1/2, \quad \mbox{for } \sigma_k = \{1,2\}, \\ 
	 \nabla f_1(x_1^k) - \nabla f_1(x_0^k)   &  \quad \mbox{with probability } 1/2, \quad \mbox{for } \sigma_k = \{2,1\}.
\end{cases} \label{def-Ek-example}
 \eeq 
 Plugging in the identities $\nabla f_1(x) = x - 1$, $\nabla f_2(x) = 2x + 1$ obtained from \eqref{eq-simple-component-func} and the inner update formula \eqref{inner-update-wo}, we obtain  
    \beq E_k = \alpha_k \mu(\sigma_k) - 2\alpha_k x_0^k \label{grad-error-exam}
    \eeq 
where $\mu(\sigma_k) = - \nabla^2 f_{\sigma_k(2)} (x^*) \nabla f_{\sigma_k(1)}(x^*)$ satisfying 
$$\mu(\sigma_k) = \begin{cases} 
	+2  &  \quad \mbox{with probability } 1/2, \quad \mbox{for } \sigma_k = \{1,2\}, \\ 
	 -1  &  \quad \mbox{with probability } 1/2, \quad \mbox{for } \sigma_k = \{2,1\}.
\end{cases}  
 $$ 
In contrast, SGD starting from an initial point $y^0$ leads to the iterations
\beq   \quad y^{j+1} = y^j - \alpha_j \nabla f_{i_j}(y^j) =  y^j - \frac{\alpha_j}{2} (\nabla f(y^j) + e^j), 
       		\label{sgd-iters-2} 
 \eeq
where $ i_j$ is an independent and identically distributed (i.i.d.) random variable with a uniform distribution over the index set $\{1,2\}$ and the gradient error $e^j$ is given by
\beq{e}^j = \begin{cases} 
	-2 - y^j  &  \quad \mbox{with probability } 1/2, \quad \mbox{for } i_j = 1, \\
	~~2 + y^j &  \quad \mbox{with probability } 1/2, \quad \mbox{for } i_j = 2.
\end{cases}\label{eq-sgd-exam-error}
 \eeq

We consider a stepsize of $\alpha_k = {R\over k^s}$ with $s=0.75$ for both algorithms. Note that for this example, RR is globally convergent to the optimal solution $x^*=0$ with probability one, therefore $x_0^j \to 0$.\footnote{To see this, note that the RR iterations for this example are given by
$ x_0^{k+1} = (1-\frac{3}{2}\alpha_k + 2\alpha_k^2) x_0^k - \alpha_k^2 \mu (\sigma_k)$
which implies, after taking norms of both sides and using the fact that $\|\mu(\sigma_k)\|\leq 2$, 
 $ \dist_{k+1}\leq (1-\frac{3}{2}\alpha_k + 2\alpha_k^2) \dist_k + 2 \alpha_k^2.$
Then, by invoking classical results for the asymptotic behavior of non-negative sequences (see e.g. \cite[Appendix A.4.3]{Bertsekas15Book}, we get $\dist_{k+1}\to 0$. Theorem~\ref{theo-rate-quadratics} also shows global convergence of RR on this example.
} By a similar argument, it can be shown that SGD is also convergent to the optimal solution $x^*=0$ in mean-square, i.e. $\E \| y^j \|^2 \to 0$ (see also e.g. \cite{kushner2003stochastic}). Then, it follows from \eqref{eq-sgd-exam-error} and \eqref{grad-error-exam} that the cumulative gradient error of SGD for any cycle $k$ (defined as the cumulative sum $\sum_{j=(k-1)m}^{km-1} e_j)$) has zero expectation and $\Theta(1)$ variance whereas the gradient errors in RR are $E_k = \bigO(\alpha_k)$ with a typically non-zero expectation satisfying $\E (E_k) = \alpha_k (1-2x_0^k)$ and an asymptotically smaller variance $\bigO(\alpha_k^2)$ compared to SGD. In other words, the cycle gradient errors go to zero with probability one for RR whereas the gradient errors in SGD are typically bounded away from zero with a positive probability.  
Informally, this leads to a more accurate direction of descent for RR and is the main reason behind the faster convergence we demonstrate for RR compared to SGD in our analysis.

\begin{figure}\label{fig-1}
\begin{center}\hspace{-0.3in}
\vspace{-0.2in}
\hspace{-0.45in}
\includegraphics[width=1.2\linewidth]{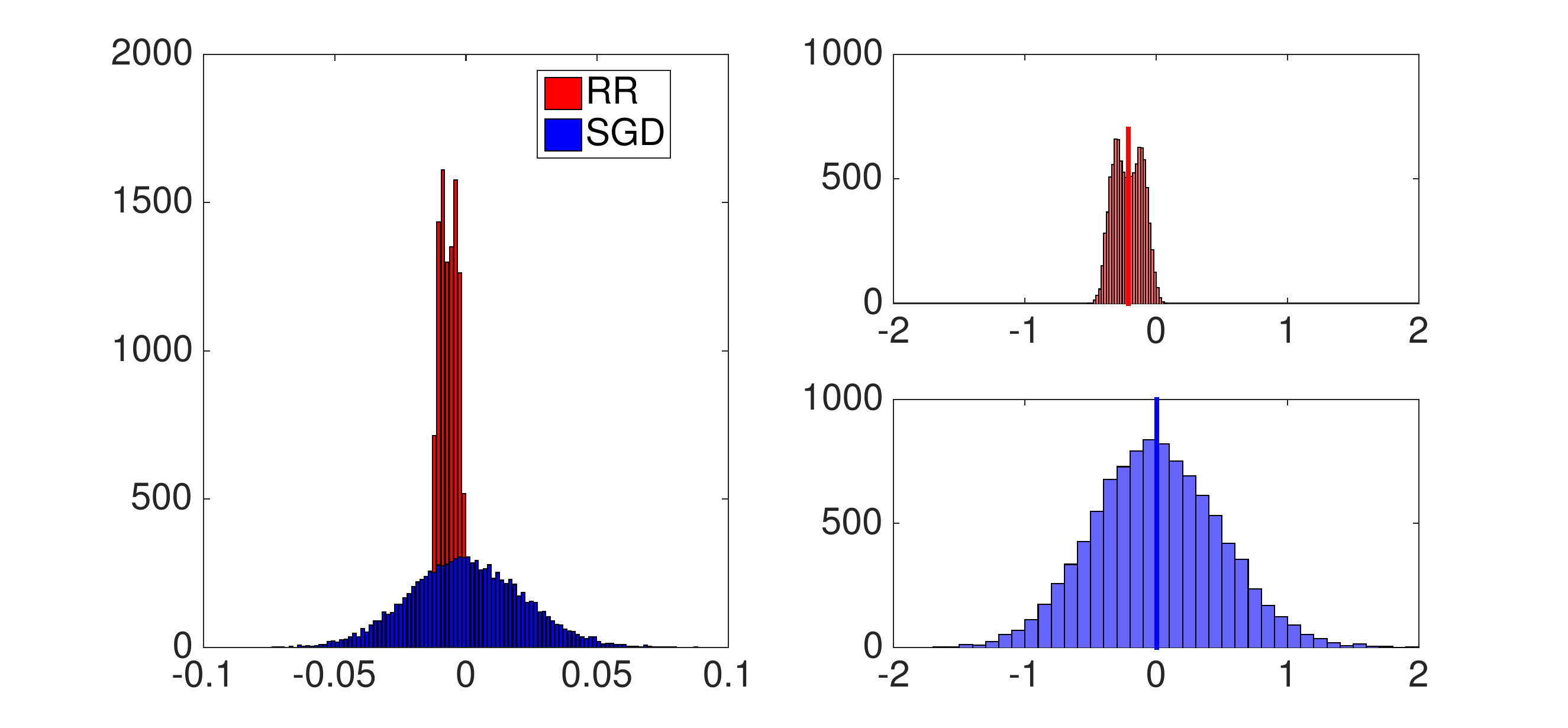}\label{quad-eq}
\caption{Left panel: Comparison of the histogram of the approximation error $\bar{x}_k - x^*$ of the averaged iterates for RR and SGD after $k=500$ cycles over $10000$ sample paths created for the Example \ref{exam-one} with $s=0.75$. Each sample path contains $1000$ gradient computations for both RR and SGD. Right, top panel: Histogram of the scaled approximation error $k^s (\bar{x}_k - x^*)$ for RR iterates which is concentrated around the vertical line in red. Right, bottom panel: Histogram of the scaled approximation error $k^{1/2} (\bar{x}_k - x^*)$ for SGD which has the shape of a standard normal distribution. The vertical blue line passing through the origin is the axis of symmetry for this distribution indicating that this distribution is centered.}
\end{center}
\end{figure}

We also observe that the cycle gradient error $E_k$ given by \eqref{grad-error-exam} consists of the sum of two terms: The first term is $\bigO(\alpha_k)$ and is independent over the cycles as the permutations $\sigma_k$ are independent and identically distributed whereas the second term is of smaller (second) order as $x_0^j \to 0$. We will show later in  Lemma~\ref{lem-martingale-variance} that such a decomposition can be obtained more generally when component functions are quadratics or they are smooth functions and will be a key step in the proof of Theorem~\ref{thm-averaged-rate-quad}. 

Figure \ref{fig-1} compares the RR and SGD algorithms with averaging in terms of the histogram of the error (distance of the averaged iterates to the optimal solution $x^*$). In other words, we compare the approximation errors $\bar{x}_k - x^*$ and $\bar{y}_{k} - x^*$ where where $\bar{y}_{k} := \frac{\sum_{j=0}^{mk-1} y^j}{mk} $ is the averaged SGD iterates after $k$ cycles (or equivalently $mk$ inner iterations). For a fair comparison, both algorithms are run with the same parameters using $k=500$ cycles over $10000$ sample paths created for the Example \eqref{exam-one} where $s=0.75$. The left panel in Figure \ref{fig-1} compares the histograms of $\bar{x}_k - x^*$ and $\bar{y}_{k} - x^*$ and shows that the approximation error $\bar{x}_k - x^*$ for RR is typically much smaller compared to that of SGD suggesting RR has a faster convergence rate. The top panel on the right illustrates that the scaled approximation error $k^s (\bar{x}_k - x^*)$ is concentrated around its mean (marked by the red line) suggesting $\bigO(1/k^s)$ convergence rate almost surely for the averaged RR iterates. On the other hand, the bottom panel on the right shows that the distribution of $k^{1/2} (\bar{y}_k - x^*)$ is approximately a standard normal distribution as predicted by the theory \cite{PolyakJuditsky92}, illustrating the $\bigO(1/k^{1/2})$ convergence rate of the averaged SGD iterates to the optimal solution $x^*$ in distribution. In Section \ref{sec:quad-case}, we will develop the first convergence theory for RR, establishing the $\bigO(1/k^s)$ convergence rate we observe in the numerical experiments and show that $k^s (\bar{x}_k - x^*)$ converges almost surely to a point for which we provide an explicit formula.

%
\end{example} 

\section{Quadratic component functions}\label{sec:quad-case}
We first consider quadratic component functions which allows an elegant analysis without the need to approximate higher order terms. We will show in Section \ref{sec:general-case} that the same line of analysis extends to smooth component function under a Lipschitz assumption on the Hessian matrices.
Let $f_i: \R^n \to \R$ be a quadratic function of the form
    \beq f_i(x) = \frac{1}{2} x_i^T P_i x - q_i^T x + r_i, \quad i=1,2,\dots, m,
    	\label{def-fi}
    \eeq
where $P_i $ is a symmetric $n\times n$ matrix, $q_i \in \R^n$ is a column vector and $r_i$ is a scalar. Note that $f_i$ has Lipschitz gradients, i.e., 
    $$ \| \nabla f_i(y) - \nabla f_i (z) \| \leq L_i \| y - z\|, \quad \forall y, z \in \R^n, $$
where $L_i = \| P_i \|$. It follows from the triangle inequality that $f$ has Lipschitz gradients with Lipschitz constant at most
	\beq  L := \sum_{i=1}^m L_i .
		\label{def-L}
	\eeq
Moreover, Assumption \ref{assump-sum-is-str-cvx} implies that the Hessian matrix of the sum satisfies 
$\nabla^2 f(x) = \sum_{i=1}^m \nabla^2 f_i(x) = \sum_{i=1}^m P_i \ge cI_n >0.$

\subsection{Convergence Rate} \label{quad-conv}

Our convergence analysis of RR builds on a recent upper bound for convergence rate of (deterministic) cyclic IG method (see  \cite{Gur2015IncGrad}), which can be generalized to hold for any fixed permutation $\sigma$ of $\{1,2,\dots,m\}$. This result implies an upper bound (for all sample paths) on the distance to the optimal solution of the iterates generated by RR, which is presented next.

\begin{theorem}\cite{Gur2015IncGrad} 
Let Assumption \ref{assump-sum-is-str-cvx} hold. Let $f_i(x)$ be a quadratic function of the form $f_i(x) = \frac{1}{2} x_i^T P_i x - q_i^T x + r_i$
where $P_i $ is a symmetric $n\times n$ matrix, $q_i \in \R^n$ is a column vector and $r_i$ is a scalar for $i=1,2,\dots,m$. Suppose Assumption \ref{assump-sum-is-str-cvx}  holds. Consider the iterates $\{x_0^k\}$ generated by the iterations \eqref{inner-update-wo} with a fixed order $\sigma$ and stepsize $\alpha_k = R/(k+1)^s$ where $R>0$ and  $s \in (1/2,1)$. Then\footnote{The original result in \cite{Gur2015IncGrad} was stated for $\sigma=\{1,2,\dots,m\}$ but here we translate this result into an arbitrary permutation $\sigma$ of $\{1,2,\dots,m\}$ by noting that processing the set of functions $\{f_1, f_2, \dots, f_m\}$ with order $\sigma$ is equivalent to processing the permuted functions $\{f_{\s_1}, f_{\s_2}, \dots, f_{\s_m}\}$ with order $\{1,2,\dots,m\}$.
},
    	\begin{align} \dist_k &\leq \frac{R \| \mu(\sigma)\| }{c}\frac{1}{k^s} + o(\frac{1}{k^s})  &\mbox{if}& \quad 1/2<s<1, \label{rate-prob-one-no-average}\\
           			  \dist_k  &\leq \frac{R^2 \|  \mu({\sigma}) \|}{ Rc - 1} \frac{1}{k} + o(\frac{1}{k})  &\mbox{if}& \quad s=1 \mbox{ and } Rc>1,
    	\end{align}
where $c$ is the strong convexity constant of the sum function $f(x)$ and
\beq
	\mu(\sigma) =  - \underset{1\leq i < j\leq m}{\sum} P_{\sigma(j)} \nabla f_{\sigma(i)}(x^*). \label{def-Msigma}
\eeq  	
	\label{theo-rate-quadratics} 
\end{theorem}
This theorem provides an upper bound on the rate with a rate constant $\mu(\sigma)$ that depends on the order \(\sigma\). 
Note that the best rate that IG with a fixed order $\sigma$ can attain in terms of upper bounds is $\bigO(1/k)$ and requires a stepsize $R/(k+1)$ with $R>1/c$ (see also \cite{Gur2015IncGrad} for the lower bound of $\Omega(1/k)$ for IG under some conditions).
We next provide some upper bounds on $\mu(\sigma)$. We define 
	\beqa \Gs :&=& \underset{1\leq i \leq m}{\sup} \| \nabla f_i(x^*)\|, 
	\label{def-Gs} \\
	 M_\Gamma :&=&~ \sup_{\sigma \in \Gamma} \|\mu(\sigma)\|.
	 \label{def-M-gamma}	
\eeqa
Using  $L_i = \| P_i \|$ for each $i$, it follows from the triangle inequality that
	\beq  \| \mu(\sigma) \| \leq M_\Gamma  \leq \sup_{\sigma \in \Gamma}  \underset{1\leq i < j\leq m}{\sum} L_{\sigma(j)} \Gs \leq \sum_{j=1}^m  (j-1) L_{\sigma(j)} \Gs \leq Lm\Gs
		\label{def-M-gamma-bound}
	\eeq
where $L$ is the Lipschitz constant of the gradient of $f$ defined by \eqref{def-L}. By replacing $\mu(\sigma)$ by $M_\Gamma$ in Theorem \ref{theo-rate-quadratics} one can get an upper bound on the worst-case convergence rate that applies to any choice of fixed order $\sigma$. Using a similar argument along the lines of the proof of Theorem \ref{theo-rate-quadratics} on the convergence rate of IG, it is straightforward to show that RR never performs any slower than this worst-case convergence rate which is the subject of the next result. The idea is to bound the stochastic $\dist_k$ sequence from above point-wise. The proof is a simple exercise and is omitted due to space considerations. 
\begin{corollary}\label{coro-deter-decay-rate-dist} Under the setting of Theorem \ref{theo-rate-quadratics}, if $\sigma$ is sampled uniformly at each cycle instead of being kept fixed, then
         \begin{align} \dist_k &\leq \frac{R M_\Gamma }{c}\frac{1}{k^s} + o(\frac{1}{k^s})  &\mbox{if}& \quad 1/2<s<1,\\
           			  \dist_k  &\leq \frac{R^2 M_\Gamma}{ Rc - 1} \frac{1}{k} + o(\frac{1}{k})  &\mbox{if}& \quad s=1 \mbox{ and } Rc>1,
    	\end{align}
with probability one where $M_\Gamma$ is deterministic and is defined by \eqref{def-M-gamma}.
\end{corollary}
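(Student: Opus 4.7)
The plan is to revisit the proof of Theorem \ref{theo-rate-quadratics} and observe that the permutation $\sigma$ enters the argument only through a per-cycle quantity that is dominated pointwise by $M_\Gamma$. Concretely, the fixed-order argument in \cite{Gur2015IncGrad} proceeds by expanding each inner iterate $x_i^k$ around $x_0^k$, performing a Taylor expansion of $\nabla f_{\sigma(i)}$ at $x^\ast$, and arriving at a one-cycle recursion of the schematic form
\beq
\dist_{k+1} \leq \big(1 - c\alpha_k + \bigO(\alpha_k^2)\big)\dist_k + \alpha_k^2 M_\sigma + \bigO(\alpha_k^3),
\label{eq-plan-recursion}
\eeq
where the scalar $M_\sigma = \|\sum_{i<j} P_{\sigma(j)} \nabla f_{\sigma(i)}(x^\ast)\|$ captures all of the dependence on the order within one cycle. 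Solving this recursion with $\alpha_k = R/(k+1)^s$ yields the $\frac{R M_\sigma}{c}\frac{1}{k^s} + o(1/k^s)$ rate.

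The key step in the proof of Corollary \ref{coro-deter-decay-rate-dist} is to redo this derivation with a fresh permutation $\sigma_k$ at every cycle. Because the expansion used to derive \eqref{eq-plan-recursion} is carried out within a single cycle, the only change is that $M_\sigma$ is replaced by the random variable $M_{\sigma_k}(\omega)$. By the definition
\beqs
M_\Gamma = \sup_{\sigma \in \Gamma} M_\sigma,
\eeqs
we have $M_{\sigma_k}(\omega) \leq M_\Gamma$ for every $\omega$ and every $k$, so \eqref{eq-plan-recursion} is pointwise dominated by a deterministic recursion of the same form with $M_\sigma$ replaced by $M_\Gamma$. Applying the same asymptotic analysis to this dominating recursion gives the claimed bound $\dist_k \leq \frac{R M_\Gamma}{c}\frac{1}{k^s} + o(1/k^s)$, valid for every sample path.

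The main obstacle I anticipate is a bookkeeping one: I would need to check that the higher-order terms lumped into $\bigO(\alpha_k^2)$ and $\bigO(\alpha_k^3)$ in \eqref{eq-plan-recursion}, as well as the constants implicit in the $o(1/k^s)$ term of Theorem \ref{theo-rate-quadratics}, can be chosen uniformly over $\sigma \in \Gamma$. This is straightforward because $\Gamma$ is a finite set of size $m!$: one may take the maximum of each order-dependent constant over $\Gamma$, which preserves its finiteness and leaves the rate unchanged. With this uniformity in hand, the dominating deterministic recursion is well-defined and the pointwise comparison argument goes through without modification, which is why the author is content to leave the proof as a simple exercise.
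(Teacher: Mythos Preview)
Your proposal is correct and matches the paper's intended approach: the paper omits the proof but explicitly states that the idea is to bound the stochastic $\dist_k$ sequence from above pointwise by rerunning the argument of Theorem~\ref{theo-rate-quadratics} and replacing the order-dependent constant by its supremum $M_\Gamma$. Your additional remark that uniformity of the lower-order constants over $\sigma\in\Gamma$ is immediate from $|\Gamma|=m!$ is exactly the bookkeeping the paper leaves as a ``simple exercise.''
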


Corollary~\ref{coro-deter-decay-rate-dist} provides a simple worst-case upper bound on the rate, however the rate constant $M_\Gamma = \sup_\sigma \|\mu(\sigma)\|$ is pessimistic and can be thought as a worst-case performance measure that holds for every sample path. 
One way to get better constants is to consider convergence in expectation, a weaker notion of convergence compared to almost sure convergence. In the next theorem, we show that $M_\Gamma$ can be improved to a typically much smaller constant $\|\bar{\mu}\|$ where \beq 
	\bar{\mu} := \E \big(\mu(\sigma_1)\big) =
			\frac{\sum_{\sigma \in \Gamma} \mu(\sigma)}{|\Gamma|}. \label{def-mu-bar}
\eeq can be thought as a measure of average performance over the choice of random permutations. 
\begin{theorem}\label{theo-expected-rate} Let $f_i(x)$ be a quadratic function of the form $f_i(x) = \frac{1}{2} x_i^T P_i x - q_i^T x + r_i,$
where $P_i $ is a symmetric $n\times n$ matrix, $q_i \in \R^n$ is a column vector and $r_i$ is a scalar for $i=1,2,\dots,m$. Suppose Assumption \ref{assump-sum-is-str-cvx} holds. Consider the iterates $\{x_0^k\}$ generated by the RR iterations \eqref{inner-update-wo} and stepsize $\alpha_k = R/(k+1)^s$ where $R>0$ and  $s \in (0,1]$. Then, 
	\begin{align} \E \left(\dist_k\right) &\leq \frac{R \| \bar{\mu} \| }{c}\frac{1}{k^s} + o(\frac{1}{k^s}) \quad  &\mbox{if}& \quad 1/2<s<1 \label{exp-dist-upper-bound-small-s}, \\
           \E \left(\dist_k\right)  &\leq \frac{R^2 \|  \bar{\mu} \|}{ Rc - 1} \frac{1}{k} + o(\frac{1}{k}) \quad &\mbox{if}& \quad s=1 \mbox{ and } Rc>1, \label{exp-dist-s-1}
    \end{align}
where the expectation is taken over the sequence of iterates, $\bar{\mu}$ is defined by \eqref{def-mu-bar}. 
\end{theorem} 

\begin{remark} A consequence of Lemma \ref{lem-computing-mu-bar} proved in the Appendix is that \beq 
  	\bar{\mu} =  \frac{1}{2} \sum_{i=1}^m P_i \nabla f_i(x^*)
\label{mu-bar-alternative}.
\eeq
where $\bar{\mu}$ is defined by \eqref{def-mu-bar}. By the triangle inequality, 
	$\| \bar{\mu} \| \leq \sum_{i=1}^m L_i \Gs = L\Gs $
where $\Gs$ is defined by \eqref{def-Gs}. This upper bound is $m$ times smaller than the previous upper bound on $M_\Gamma$ in \eqref{def-M-gamma-bound}. \rev{As an example, consider $s= 1$ with $Rc = 2$. In this case, $L = \bigO(m)$, $c = \bigO(m)$, $R = \bigO(1/m)$ and $\| \bar{\mu}\| = \bigO(m)$. We obtain from \eqref{exp-dist-s-1} that $\E \left(\dist_k\right) = O(\frac{1}{mk}) + o(1/k)$. Note that, the performance guarantee for IG from Corollary \ref{coro-deter-decay-rate-dist} is $\dist_k = \bigO(1/k) + o(1/k)$ which is also worse than RR by a factor of $O(m)$. For a fair comparison with the SGD method, we consider running the SGD iterations for $j = km$ iterations so that both RR and SGD methods have access to the same number of component gradients. In this case, expected distance to suboptimality for SGD with the recommended $O(1/j)$ stepsize is $\bigO(\frac{1}{\sqrt{j}})$ where the hidden constants are independent of $m$ (see e.g. \cite{PolyakJuditsky92,Moulines:2011vy}) which is worse than the $\bigO(1/j)$ guarantees for RR when $j$ is sufficiently large. These bounds show that when $m$ is small and $k$ is large, IG could outperform SGD in theory, however SGD (and RR) are more suitable for applications when $m$ is large and will admit better bounds compared to IG if $m$ is large enough for a given $k$ fixed\footnote{We note however that SGD upper bounds are in expectation whereas IG results are deterministic which is a stronger notion of convergence.}}  
\end{remark}


It is also natural to ask what would happen to the rate constants and to the rate if one would take stepsize $\alpha_k = \Theta(1/k^s)$ and apply (Polyak-Ruppert) averaging to the RR iterates, especially given the fact that $\bigO(1/k^s)$ stepsize used in averaging does not require adjustment of the parameter $R$ to the strong convexity level. More generally, one could consider $q$-suffix averaging. In the next section, we show that for the averaged RR iterates, similar upper bounds in \eqref{exp-dist-upper-bound-small-s} hold not only in expectation but also in probability. Another benefit of averaging is that it leads to not only upper bounds but also lower bounds which can then be leveraged to accelerate RR further as we will show in Section \ref{sec:accelerated-RR}. 


\subsection{Convergence rate with averaging} The following theorem characterizes the rate of convergence of the averages of iterates generated by RR. Part $(i)$ and $(ii)$ of this theorem show that $q$-suffix averages of the RR iterates converge at rate $1/k^s$ to the optimal solution almost surely with a stepsize $\Theta(1/k^s)$ for $s \in (1/2,1)$. By gradient Lipschitzness, this translates into a rate of $\Theta(1/k^{2s})$ for the suboptimality of the objective value.  The result is based on decoupling the cycle gradient errors $E_k$ into a $\Theta(\alpha_k)$ term independent over the cycles and another $\bigO(\alpha_k^2)$ term that becomes negligible in the limit. Part $(iii)$ is a high-probability convergence rate estimate for the approximation error $\xqk - x^*$. The approximation error consists of two terms, the first term $b_{q,k}$ which we call the ``bias" term is deterministic and decays like $\sim 1/k^s$. It comes from the expected value of the independent part of the gradient cycle errors which may be different than zero. The second part is on the order of $1/k$ for $0<q<1$ (and $\log k/k$ when $q=1$) and it is based on the Azuma-Hoeffding inequality for martingale concentration. Finally, part $(iv)$ is on estimating the bias term $b_{q,k}$ with another quantity $\hat{b}_{q,k}$. It shows that by subtracting the estimated bias from the averaged iterates, we can approximate the optimal solution $x^*$ up to an $\bigO(1/k)$ error in distances or equivalently up to an $\bigO(1/k^2)$  error in the suboptimality of the objective value. In Section \ref{sec:accelerated-RR}, this result will be fundamental for  Algorithm \ref{fastrs} that accelerates the convergence of RR from $\Theta(1/k^{2s})$ to $\bigO(1/k^2)$ with high probability in the suboptimality of the objective value.


\begin{theorem}\label{thm-averaged-rate-quad}  Let $f_i(x)$ be a quadratic function of the form
	$$f_i(x) = \frac{1}{2} x_i^T P_i x - q_i^T x + r_i$$ 
where $P_i $ is a symmetric $n\times n$ matrix, $q_i \in \R^n$ is a column vector and $r_i$ is a scalar for $i=1,2,\dots,m$. Consider the $q$-suffix averages $\xqk$ of the RR iterates generated by the iterations \eqref{inner-update-wo} with stepsize $\alpha_k = \frac{R}{(k+1)^s}$ where $R>0$ and $s \in (\frac{1}{2},1)$. Suppose that Assumption \ref{assump-sum-is-str-cvx} holds. Then the following statements are true:
\begin{enumerate}
\item [$(i)$] For any $0 < q \leq 1$, the $q$-suffix averaged stepsize $\aqk$ defined in \eqref{def-averaged-stepsize} satisfies
    \beq\aqk = \frac{\aqs}{k^s} + \bigO(\frac{1}{k}) \quad \mbox{where} \quad \aqs = \frac{1-(1-q)^{1-s}}{q(1-s)} R.\label{def-aqk}
    \eeq
 \item [$(ii)$] For any $0 < q \leq 1$, we have 
    \beq \lim_{k \to \infty} \frac{\bar{x}_{q,k} - x^*}{\aqk} =  -H_*^{-1} \bar{\mu}\quad a.s. \label{limit-main-result}
    \eeq
where $\bar{\mu}$ is given by \eqref{mu-bar-alternative}, i.e., the normalized error $(\bar{x}_{q,k} - x^*)/\aqk$  converges to the constant vector $-H_*^{-1} \bar{\mu}$ almost surely where $\Hs=\sum_{i=1}^m P_i$ is the Hessian matrix at the optimal solution and $\bar{\mu}$ is given by \eqref{mu-bar-alternative}.
Then, from part $(i)$,  
 \beq \lim_{k \to \infty} k^s (\bar{x}_{q,k} - x^*) =  -\aqs H_*^{-1} \bar{\mu} \quad a.s.\label{lim-rate-aver-quad}
 \eeq
Hence, the $q$-suffix averaged iterates $\xqk$ converge to the optimal solution $x^*$ with rate $1/k^s$ almost surely. 
   
 \item [$(iii)$] With probability at least $1-\delta$, we have
	 $$ \bar{x}_{q,k} -  x^*  =  b_{q,k} + \bigO \bigg(\frac{\sqrt{\log(1/\delta)}}{k} \bigg) +  \begin{cases} \bigO \big(\frac{\log k}{k} \big)  & \mbox{if} \quad  q=1 \\ \bigO(\frac{1}{k}\big) & \mbox{if} \quad  0<q<1, \end{cases} $$ where
	 \beq b_{q,k} = -\bar{\alpha}_{q,k} H_*^{-1}  \bar{\mu} 
	 		\label{def-bqk}
	 \eeq 
is deterministic, $\bar{\mu}$ is given by $\eqref{mu-bar-alternative}$ and $\bar{\alpha}_{q,k}$ is the averaged stepsize defined in \eqref{def-averaged-stepsize}. The constants hidden by $\bigO(\cdot)$ depend only on $\Gs, L, m, R, c, q$ and $s$.
	 \item [$(iv)$] Let \beq  \hat{b}_{q,k} = -\bar{\alpha}_{q,k}\bigg[\sum_{i=1}^{m} P_{\ski}\bigg]^{-1} \sum_{i=1}^{m} P_{\ski} \nabla f_{\ski} (x_{i-1}^k)/2. \label{def-hat-bqk}
	 \eeq
where $\bar{\alpha}_{q,k}$ is the averaged stepsize defined in \eqref{def-averaged-stepsize}. Then, 	 
	 $\hat{b}_{q,k}  = b_{q,k} + \bigO(\alpha_k^2). $
It follows from part $(ii)$ that with probability at least $1-\delta$,
	  $$ (\bar{x}_{q,k} - \hat{b}_{q,k}) -  x^*  = \bigO \bigg(\frac{\sqrt{\log(1/\delta)}}{k} \bigg) +  \begin{cases} \bigO \big(\frac{\log k}{k} \big)  & \mbox{if} \quad  q=1 \\ \bigO(\frac{1}{k}\big) & \mbox{if} \quad  0<q<1. \end{cases} $$ 
\end{enumerate} 
 
 \end{theorem} 

 \begin{proof} 
	\begin{itemize} 
	\item [$(i)$] As the stepsize sequence is monotonically decreasing, we have the bounds 
   \beqas \int_{(1-q)k}^k  \frac{R}{(x+2)^s} dx \leq  \sum_{j=(1-q)k}^k \alpha_j = \sum_{j=(1-q)k}^k \frac{R}{(k+1)^s} 
   			 &\leq& R + \int_{(1-q)k}^{k-1}  \frac{R}{(x+1)^s} dx. 
   \eeqas
Dividing each term by $qk$, after a straightforward integration we obtain
  $$ \aqk = \frac{ k^{1-s} - \big((1-q)k+1\big)^{1-s} + \bigO(1)}{(1-s)qk} R  = \frac{a_q(s)}{k^s} + \bigO(\frac{1}{k}). 
   $$
 which completes the proof.  
 			\item [$(ii)$] Taking the $q$-suffix averages of both sides of \eqref{averaged-iter-randomized}, we obtain 
\beq 
	I_{q,k} := \frac{\sum_{j=(1-q)k}^{k-1} {(x_0^j - x_{0}^{j+1})}{\alpha_j^{-1}}}{qk} = \frac{ \sum_{j=(1-q)k}^{k-1}   \nabla f(x_0^j)+ E_j}{qk}. \label{averaged-iter-random-2}
   \eeq 
As $f$ is a quadratic, the first order Taylor series for the gradient of $f$ is exact:
  \beq \nabla f(x_0^j) = H_* (x_0^j-x^*).
  		\label{grad-as-hessian-prod-quadratics}
  \eeq
 Therefore, \eqref{averaged-iter-random-2} becomes  $
I_{q,k} 
= \frac{ \sum_{j=(1-q)k}^{k-1}   H_* (x_0^j - x^*)   + E_j }{qk} $
which is equivalent to
\beqa
                    I_{q,k} &=& H_*(\bar{x}_{q,k} - x^*) + \frac{\sum_{j=(1-q)k}^{k-1} E_j}{qk} = H_*(\bar{x}_{q,k} - x^*) + \bar{\alpha}_{q,k}  Y_{q,k}\label{iterate-eq-randomized}
   \eeqa 
where $Y_{q,k}$ is defined as
    \beq Y_{q,k} := \frac{1}{\bar{\alpha}_{q,k}} \frac{\sum_{j=(1-q)k}^{k-1} E_j }{qk } =  \frac{\sum_{j=(1-q)k}^{k-1} E_j}{\sum_{j=(1-q)k}^{k-1} \alpha_j}	
		\label{def-Yqk}    
     \eeq
and can be interpreted as the ($q$-suffix) averaged gradient error sequence $E_j$  normalized by the ($q$-suffix) averaged stepsize sequence $\alpha_j$. Since $H_*$ is invertible by the strong convexity of $f$ (see \eqref{def-Hs}), we can rewrite \eqref{iterate-eq-randomized} as
\beqa  \bar{x}_{q,k} - x^* &=& 
                     - H_*^{-1} \aqk \Yqk  + H_*^{-1} \Iqk  
            = - H_*^{-1} \bar{\alpha}_{q,k} Y_{q,k}  +  \begin{cases} \bigO(\frac{1}{k})  & \mbox{if } \quad 0<q<1 \\
\bigO( \frac{\log k}{k})  & \mbox{if } \quad q = 1.
\end{cases} \label{xk-averaged-asymptotics-2}
   \eeqa              
where we used the inequality $\| H_*^{-1}\| \leq 1/c $ implied by \eqref{def-Hs} and Lemma \ref{lem-I-lk} from the appendix to provide an upper bound for the second term in the first equality. Note that, as a consequence of Lemma \ref{lem-I-lk}, $\bigO(\cdot)$ notation above hides a constant that depends only on the parameters $\Gs, L,c, m, R, s, q$ and also $\dist_0$ when $q=1$. Then, dividing both sides of \eqref{xk-averaged-asymptotics-2} by $\aqk$, taking limits as $k$ goes to infinity,  using part $(i)$ on the asymptotic behavior of $\aqk$ and the fact that $Y_{q,k} \to \bar{\mu}$ a.s. from Lemma \ref{lem-martingale-variance}, we obtain the claimed result.
%
%
%
%

 \item [$(iii)$] By parts $(i)$ and $(iii)$ of Lemma \ref{lem-martingale-variance} from the appendix that relates the gradient error sequence $E_j$ to a sequence of i.i.d. variables $\mu(\sigma_j)$, for $0<q\leq 1$,  
		\beq Y_{q,k} = \frac{\sum_{j=(1-q)k}^{k-1} E_j}{\sum_{j=(1-q)k}^{k-1} \alpha_j} =  \frac{\sum_{i=(1-q)k}^{k-1} \alpha_j \mu(\sigma_j) + \bigO(\alpha_j^2)}{\sum_{j=(1-q)k}^{k-1} \alpha_j}. 	 
			\label{Yqk-helper}  
     \eeq 
We first give a proof for $q=1$, the proof for the remaining $q \in (0,1)$ case will be similar. Assume $q=1$. Plugging $q=1$ and \eqref{Yqk-helper} into \eqref{xk-averaged-asymptotics-2}, we obtain
     \beqa \bar{x}_{1,k} - x^* 
     &=&\bigO(\frac{\log k}{k})  - H_*^{-1} \aonek \Yonek \nonumber \\
     &=& \bigO(\frac{\log k}{k}) - H_*^{-1} \bigg(   \frac{\sum_{j=0}^{k-1} \alpha_j \big(\mu(\s_j) - \bar{\mu}\big)}{k} +  \frac{\sum_{j=0}^{k-1}  \alpha_j \bar{\mu} + \bigO(\alpha_j^2) }{k}  \bigg) \nonumber \\
     						 &=& b_{1,k}  +  \bigO(\frac{\log k}{k})  - H_*^{-1}   \frac{\sum_{j=0}^{k-1}  \alpha_j (\mu(\s_j) - \bar{\mu})}{k} - H_*^{-1} \sum_{j=0}^{k-1}  \frac{\bigO(\alpha_j^2)}{k} \nonumber \\     
     						    &=& b_{1,k}  +  \bigO(\frac{\log k}{k})  - H_*^{-1}   \frac{\sum_{j=0}^{k-1}  \alpha_j (\mu(\s_j) - \bar{\mu})}{k} \label{xqk-high-proba} 
     \eeqa    
where $\bonek$ is defined by \eqref{def-bqk} and we used in the last step the fact that for $s>1/2$ 
	\beq \sum_{j=0}^\infty \alpha_j^2 = \sum_{j=1}^\infty \frac{R^2}{j^{2s}} = R^2 \zeta(2s) < \infty
		\label{square-summable-step}	
	\eeq 
where $\zeta(\cdot)$ is the Riemann-Zeta function. We now study the asymptotic behavior of the last summation term in \eqref{xqk-high-proba} by introducing the process
	$S_{1,k} = \sum_{j=0}^{k-1} Z_j$, where $Z_j := \alpha_j (\mu(\s_j) - \bar{\mu})$ and $k\geq 0$ with the convention that $S_{1,0} = 0$. Equipped with this definition, \eqref{xqk-high-proba} becomes
  \beq 
  		 \xonek - \xs = b_{1,k}  +  \bigO(\frac{\log k}{k})  - H_*^{-1}   \frac{S_{1,k}}{k}.  \label{xk-biased-high-probability}
  \eeq		  
The random variables $Z_j$ are independent, centered and have an identical distribution up to the scaling factor $\alpha_j$. Therefore,  $S_{1,k}$ is a sum of  centered random variables satisfying:
    \beqa \begin{scriptsize} \| S_{1,k} - S_{1,k-1} \| =\big \|\alpha_{k-1}\big(\mu(\sigma_{k-1})-\bar{\mu}\big)\big\| 
    \leq \gamma_{k-1}:=\alpha_{k-1}LmG_* \label{marting-diff-as-bound}
    \end{scriptsize}
    \eeqa 
where we used \eqref{def-mu-sk-upper-bound} in the last inequality (see also Lemma \ref{lem-computing-mu-bar}). Then, by the Azuma-Hoeffding inequality, for every $t>0$, 
  \beqas \P \bigg( \big\|\frac{S_{1,k}}{k}\big\| > \frac{t}{k}\bigg) &\leq& 2 \exp \bigg( - \frac{t^2}{2\sum_{j=0}^{k-1} \gamma_j^2} \bigg) =  2 \exp \big( - \frac{t^2}{ \beta }\big) 
  \eeqas
where $\beta = 2\sum_{j=0}^{\infty} \gamma_j^2 < \infty$ as $\alpha_j$ is square-summable (see \eqref{square-summable-step}). Note that $\beta$ depends only on $\Gs, L,m$ and the stepsize parameters $R$ and $s$. It is easy to see that selecting $t \geq t_\delta = \sqrt{\beta  \log(2/\delta)}$ makes the right-hand side $\leq \delta$.  Therefore for any $\delta>0$, with probability at least $1-\delta$, 
		\beq \bigl \| \frac{S_{1,k}}{k} \bigr \| \leq \frac{\sqrt{\beta  \log(2/\delta)}}{k}
			\label{average-sum-bound-whp}
		\eeq
which if inserted into the expression \eqref{xk-biased-high-probability} completes the proof for the $q=1$ case. For  $0<q<1$ case, the same line of reasoning applies except that we replace $b_{1,k}$ with $b_{q,k}$ and we can improve the $\bigO( \log k /k)$ term in the expression \eqref{xk-biased-high-probability} to $\bigO(1/k)$, this is justified by \eqref{xk-averaged-asymptotics-2}. Then, this leads to 
		 \beq 
  		 \xqk - \xs = b_{q,k}  +  \bigO(\frac{1}{k})  - H_*^{-1}   \frac{S_{q,k}}{qk}  \label{xk-biased-high-probability-3}
  \eeq
where $S_{q,k} := \sum_{j=(1-q)k}^{k-1} Z_j = S_{1,k} - S_{1,(1-q)k}$ is the $q$-suffix cumulative sum (cumulative sum of the last $qk$ terms) of the sequence $Z_{k}$. Then using \eqref{average-sum-bound-whp}, with probability at least $1-\delta$,
\beq \bigl \| \frac{S_{q,k}}{k} \bigr \| \leq \| \frac{S_{1,k}}{k} \bigr \|  + \| \frac{S_{1,(1-q)k}}{k} \bigr \|  \leq \frac{2 t_\delta}{k}.
			\label{average-sum-bound-whp-3}
		\eeq
Plugging this high probability bound into \eqref{xk-biased-high-probability-3}, we conclude. 		
	\item [$(iv)$] By Lemma \ref{lem-random-tail-bound}, we have $\underset{1\leq i < m}{\max} \|x_{i-1}^k - x^*\| = \bigO(\alpha_k)$. Therefore,
	 \beqa \scriptsize \| \nabla f_{\ski} (x_{i-1}^k)  - \nabla f_{\ski} (x^*) \|  = \bigO(\alpha_k) \label{grad-diff-quadratic}
	 \eeqa
for any $i=1,2,\dots,m$. As a consequence, 	
	\beqas \hat{b}_{q,k} &=& -\bar{\alpha}_{q,k} H_*^{-1} \sum_{i=1}^{m} P_{\ski}  \bigg(\nabla f_{\ski} (x^*) + \bigO(\alpha_k)\bigg)  \\
	                         &=& - \bar{\alpha}_{q,k} H_*^{-1} \sum_{j=1}^{m} P_{j} \nabla f_j (x^*)   + \bigO(\alpha_k^2)   
	                         = b_{q,k} + \bigO(\alpha_k^2)	                         	                        	                         
	\eeqas		 
where in the second equality we use the fact that $\bar{\alpha}_{q,k}= \bigO(1/k^s) = \bigO(\alpha_k)$ implied by part $(i)$.
\end{itemize}
\end{proof}

\section{Extension to smooth component functions}\label{sec:general-case}
Extending our results to more general smooth functions requires obtaining similar bounds for the cycle gradient errors which depend on the gradients and Hessian matrices of the component functions along the inner iterates. In order to be able to control the change of gradients and Hessian matrices along the iterates, we introduce the following assumption which has also been used to analyze SGD \cite{Moulines:2011vy}.

\begin{assumption}\label{Lip-Hessian} The functions $f_i$ are convex on $\R^n$ and have Lipschitz continuous second derivatives, i.e. there exists a constant $\hlip_i$ such that 
   $$ \| \nabla^2  f_i(x) - \nabla^2 f_i(y) \| \leq \hlip_i \| x - y \|, \quad \forall x, \forall y \in \R^n, $$
for $i=1,2,\dots,m$. 
\end{assumption}

Under this assumption, by the triangle inequality, $\nabla^2 f(\cdot)$ is also Lipschitz with constant $  \hlip:= \sum_{i=1}^m \hlip_i. 
	\label{def-lip-hess-cont}
$
When the component functions are quadratics, we have the special case with $U=U_i = 0$. We will now see how this assumption makes it possible to control the change of gradients of the component functions. Smooth functions $f$ with Lipschitz Hessians are quadratic-like in the sense that the first-order Taylor approximation to the gradient of $f$ is almost affine (with a quadratic term controlled by the parameter $U$) satisfying
	\beq \nabla f(x) = \nabla f(\xs) + H_* (x - \xs) + \eta, \quad \| \eta \| \leq \frac{U}{2}\| x- \xs\|^2, \quad \forall x.
		\label{taylor-of-lip-Hessian}
	\eeq
(see e.g. \cite[Section 1.3]{Gould2003Book})	
The analysis of Theorem \ref{thm-averaged-rate-quad} (and Lemma \ref{lem-martingale-variance} it builds upon) considers the $U=0$ case (see e.g. \eqref{grad-as-hessian-prod-quadratics} and \eqref{grad-diff-quadratic}) applying a first-order Taylor approximation to the gradient of the component functions at $x=x_0^k$ where $\|x-x^*\| = \|x_0^k-x^*\| = \bigO(\alpha_k)$ by Lemma \ref{lem-random-tail-bound}. Therefore, when $U\neq 0$, an extra correction term $\eta = \bigO(\alpha_k^2)$ needs to be added to the analysis. However, we show in the next theorem that this correction term does not cause a slow down in the convergence rate (in terms of dependency in $k$) compared to the quadratic case because the $q$-suffix averages of this $\bigO(\alpha_k^2)$ correction term decays like $\bigO(1/k)$.\footnote{This is due to the fact that the sequence $\alpha_k^2$ is summable when $s>1/2$.} 

We will also need one more technical assumption that appeared in a number of papers in the literature for analyzing incremental methods to rule out the case that the iterates diverge to infinity. In particular, this assumption is made in \cite{Gur2015IncGrad} for generalizing Theorem \ref{theo-rate-quadratics} on the rate of deterministic IG from quadratic functions to general smooth functions which we will be referring to. 

\begin{assumption}\label{assum-iters-bdd} Iterates $\{x_j^k\}_{j,k}$ generated are uniformly bounded, i.e. there exists a non-empty compact Euclidean ball $\mathcal{X} \subset \R^n$ that contains all the iterates a.s.\footnote{Note that if this assumption holds and if $f_i$ is three-times continuously differentiable on the compact set $\mathcal{X}$, then the third-order derivatives are bounded and Assumption \ref{Lip-Hessian} holds.}\end{assumption}

Equipped with these two assumptions, all the results of Theorem \ref{thm-averaged-rate-quad} extend naturally with minor modifications. In particular, $P_i$ (which is a constant Hessian matrix in the setting of Theorem \ref{thm-averaged-rate-quad}) needs to be replaced by $\nabla^2 f_i (\xs)$ or $\nabla^2 f_i (x_{i-1}^k)$ depending on the context.  
\begin{theorem}\label{thm-averaged-random-rate} Consider the RR iterations given by \eqref{inner-update-wo} with stepsize $\alpha_k = \frac{R}{(k+1)^s}$ where $R>0$ and $s \in (\frac{1}{2},1)$. Suppose that Assumptions \ref{assump-sum-is-str-cvx}, \ref{Lip-Hessian} and \ref{assum-iters-bdd} hold. Then the following statements are true:
\begin{enumerate}
 \item [$(i)$] For any $0 < q \leq 1$,  $ \lim_{k \to \infty} k^s (\bar{x}_{q,k} - x^*) = -\aqs H_*^{-1} \mub \quad a.s.$
where $\Hs=\nabla^2 f(x^*)$ is the Hessian matrix at the optimal solution, $a_q(s)$ is defined by \eqref{def-aqk}
and 
\beq      
    \mub :=  \frac{1}{2} \sum_{i=1}^m \nabla^2 f_i(x^*) \nabla f_i(x^*). 
 \eeq
   
 \item [$(ii)$] With probability at least $1-\delta$, we have
	 $$ \bar{x}_{q,k} -  x^*  =  r_{q,k} + \bigO \bigg(\frac{\sqrt{\log(1/\delta)}}{k} \bigg) +  \begin{cases} \bigO \big(\frac{\log k}{k} \big)  & \mbox{if} \quad  q=1 \\ \bigO(\frac{1}{k}\big) & \mbox{if} \quad  0<q<1, \end{cases} $$ where \beq r_{q,k} = - \bar{\alpha}_{q,k}H_*^{-1}  \mub 
	 		\label{def-bqk-2}
	 \eeq 
is deterministic. The constants hidden by $\bigO(\cdot)$ depend only on $\Gs, L, m, R, c, q, s$ and $\hlip$.
 	 \item [$(iii)$] Let $$ \hat{r}_{q,k} = -\bar{\alpha}_{q,k}\bigg[ \sum_{i=1}^{m} \nabla^2 f_{\ski}(x_{i-1}^k)\bigg]^{-1} \sum_{i=1}^{m}\nabla^2 f_{\ski} (x_{i-1}^k)  \nabla f_{\ski} (x_{i-1}^k)/2.$$
Then, $\hat{r}_{q,k}  = r_{q,k} + \bigO(\alpha_k^2).$
It follows from part $(ii)$ that with probability at least $1-\delta$,
	  $$ (\bar{x}_{q,k} - \hat{r}_{q,k}) -  x^*  = \bigO \bigg(\frac{\sqrt{\log(1/\delta)}}{k} \bigg) +  \begin{cases} \bigO \big(\frac{\log k}{k} \big)  & \mbox{if} \quad  q=1 \\ \bigO(\frac{1}{k}\big) & \mbox{if} \quad  0<q<1. \end{cases} $$ 
\end{enumerate} 
 \end{theorem}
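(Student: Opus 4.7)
The plan is to mimic the proof of Theorem~\ref{thm-averaged-rate-quad} step by step, with two substantive modifications that come from allowing $U>0$ in Assumption~\ref{Lip-Hessian}. First, wherever the quadratic proof used the \emph{exact} identity $\nabla f(x) = H_*(x-x^*)$ (cf.~\eqref{grad-as-hessian-prod-quadratics}), we now invoke the approximate Taylor identity~\eqref{taylor-of-lip-Hessian}, which introduces a quadratic remainder $\eta$ with $\|\eta\|\le \tfrac{U}{2}\|x-x^*\|^2$; analogous approximate identities must be written for each component gradient $\nabla f_{\ski}(x_{i-1}^k)$ around $x^*$, with the constant Hessians $P_i$ replaced by $\nabla^2 f_i(x^*)$ (or by $\nabla^2 f_{\ski}(x_{i-1}^k)$ in part~(iii)). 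Second, we need the smooth-function analogue of Corollary~\ref{coro-deter-decay-rate-dist} to guarantee $\dist_k = \bigO(1/k^s)$ almost surely, and hence $\max_{1\le i\le m}\|x_{i-1}^k - x^*\| = \bigO(\alpha_k)$; this is exactly where Assumption~\ref{assum-iters-bdd} is needed, together with the corresponding generalization of Theorem~\ref{theo-rate-quadratics} from~\cite{Gur2015IncGrad}.

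With these substitutions in hand, I would re-derive the smooth analogue of Lemma~\ref{lem-martingale-variance}, namely a decomposition
\[
E_k = \alpha_k v(\sigma_k) + \bigO(\alpha_k^2),
\qquad \E[v(\sigma_k)] = \mu_*,
\]
where $v(\sigma_k)$ is an i.i.d.\ sequence (indexed by the uniformly sampled permutation) whose expectation is $\mu_* = \tfrac{1}{2}\sum_{i=1}^m \nabla^2 f_i(x^*)\nabla f_i(x^*)$. The derivation uses the same telescoping of the inner iterates that produced~$v$ in the quadratic case, but the Taylor remainders $\eta$ contribute an additional $\bigO(\|x_0^k - x^*\|^2) = \bigO(\alpha_k^2)$ perturbation by the previous paragraph, which is absorbed into the $\bigO(\alpha_k^2)$ term. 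Because $\alpha_k^2$ is summable for $s>1/2$ (cf.~\eqref{square-summable-step}), the $q$-suffix average of the $\bigO(\alpha_k^2)$ part is $\bigO(1/k)$, and therefore does not affect the leading $1/k^s$ asymptotic. This is the main technical hurdle: carefully tracking the $\eta$-terms through the inner-cycle recursion so that they all end up bounded by $\bigO(\alpha_k^2)$ with constants depending only on $\Gs, L, m, R, c, s, q$ and the new parameter $\hlip$.

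Once the decomposition $E_k = \alpha_k v(\sigma_k) + \bigO(\alpha_k^2)$ is in place, the argument is essentially identical to the proof of Theorem~\ref{thm-averaged-rate-quad}. For part~(i), I would take $q$-suffix averages of~\eqref{averaged-iter-randomized}, rewrite $\nabla f(x_0^j) = H_*(x_0^j - x^*) + \eta_j$ with $\|\eta_j\| = \bigO(\alpha_j^2)$, solve for $\bar{x}_{q,k} - x^*$ as in~\eqref{xk-averaged-asymptotics-2}, and apply the strong law of large numbers to $Y_{q,k}$ using the i.i.d.\ part of $E_k$; the perturbation terms all contribute $o(1/k^s)$ almost surely. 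The closed form for $\aqk$ via Lemma~\ref{lem-I-lk} is unchanged, yielding the claimed limit $\aqs H_*^{-1}\mub$.

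For part~(ii), I would apply Azuma--Hoeffding to the martingale $S_{q,k} = \sum_{j=(1-q)k}^{k-1}\alpha_j(v(\sigma_j)-\mu_*)$ exactly as in the quadratic case; the bound~\eqref{marting-diff-as-bound} goes through because $v(\sigma_k)$ is a bounded function of a permutation (the boundedness constants now involve $\hlip$ as well, but are still uniform in $k$). The deterministic part of the averaged i.i.d.\ component gives the bias $r_{q,k} = -\bar{\alpha}_{q,k} H_*^{-1}\mub$, while the residual $\bigO(\alpha_k^2)$ terms average to $\bigO(1/k)$. For part~(iii), the identity $\hat{r}_{q,k} = r_{q,k} + \bigO(\alpha_k^2)$ follows by writing $\nabla f_{\ski}(x_{i-1}^k) = \nabla f_{\ski}(x^*) + \bigO(\alpha_k)$ (using $\|x_{i-1}^k-x^*\| = \bigO(\alpha_k)$ and Lipschitz gradients) and $\nabla^2 f_{\ski}(x_{i-1}^k) = \nabla^2 f_{\ski}(x^*) + \bigO(\alpha_k)$ (using Assumption~\ref{Lip-Hessian}), then invoking continuity of matrix inversion around $H_*$ together with $\bar{\alpha}_{q,k} = \bigO(\alpha_k)$. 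Combining with part~(ii) yields the final high-probability bound.
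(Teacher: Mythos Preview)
Your proposal is correct and follows essentially the same route as the paper's proof: reduce to the quadratic argument of Theorem~\ref{thm-averaged-rate-quad}, replacing the exact identity $\nabla f(x_0^j)=H_*(x_0^j-x^*)$ by its Taylor version with remainder $\eta_j=\bigO(\alpha_j^2)$, invoking the smooth-function analogues of Corollary~\ref{coro-deter-decay-rate-dist} and Lemmas~\ref{lem-random-tail-bound}--\ref{lem-martingale-variance}, and absorbing all the extra $\bigO(\alpha_k^2)$ terms into $\bigO(1/k)$ after averaging since $\sum_j\alpha_j^2<\infty$. One minor correction: the bound $\|v(\sigma_k)\|\le LmG_*$ does \emph{not} pick up a dependence on $U$, since $v(\sigma_k)$ is defined purely through $\nabla^2 f_i(x^*)$ and $\nabla f_i(x^*)$; the Hessian Lipschitz constant $\hlip$ enters only through the $\bigO(\alpha_k^2)$ remainder in the decomposition of $E_k$.
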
 
        
\begin{proof} The proof techniques of Theorem \ref{thm-averaged-rate-quad} applies directly except that the Taylor approximation for the gradients of the component functions will have an extra term compared to the proof of Theorem \ref{thm-averaged-rate-quad} (see also \eqref{taylor-of-lip-Hessian}). Also, instead of Lemmas \ref{lem-I-lk} and \ref{lem-martingale-variance} that apply to only quadratic functions, their extensions Lemmas \ref{lem-I-Ik-2} and \ref{lem-martingale-variance-2} given in the appendix are used in the proof. For the sake of completeness, besides these changes, we also give an overview of the main modifications required for each part of the proof: 
\begin{itemize}
	\item [$(i)$] The expression \eqref{grad-as-hessian-prod-quadratics} for the gradient should be modified to include an extra error term $\eta_j$ of the form
\beq \nabla f(x_0^j) = H_* (x_0^j-x^*) + \eta_j, \quad \| \eta_j \| \leq \frac{\hlip}{2} \| x_0^j - x^* \|^2 \label{lip-hessian-iter-bound}
\eeq 
By Lemma \ref{lem-random-tail-bound-2}, $\sum_j \eta_j \leq \frac{\hlip}{2} \|\| x_0^j - x^* \|^2 = \bigO(\alpha_j^2)$ therefore the sequence $\eta_j$ is summable and if averaged decays like $\bigO(1/k)$ without degrading the convergence rate except possibly the constants hidden by $\bigO(\cdot)$. 
	\item [$(ii)$] The same proof applies by invoking Lemma \ref{lem-martingale-variance-2} in lieu of Lemma \ref{lem-martingale-variance}. 
	\item [$(iii)$] Instead of Lemma \ref{lem-random-tail-bound}, we use Lemma \ref{lem-random-tail-bound-2}. The expression \eqref{grad-diff-quadratic} on the difference of gradients needs to be adjusted as
	\beq \| \nabla f_{\ski} (x_{i-1}^k)  - \nabla f_{\ski} (x^*) - \nabla^2 f_{\ski} (x^*) (x_{i-1}^k - x^*) \| \leq \frac{U}{2}\| x_i^k - x^*\|^2. \label{grad-diff-quadratic-2}
	 \eeq 	 
The right-hand side is still $\bigO(\alpha_k^2)$ by an application of Lemma \ref{lem-random-tail-bound-2} therefore the rest of the proof applies.
\end{itemize}
%
%
\end{proof}

\section{An RR algorithm with bias removal}\label{sec:accelerated-RR}
Part $(iii)$ of Theorem \ref{thm-averaged-random-rate} (see also part $(iii)$ of Theorem \ref{thm-averaged-rate-quad}) shows that if the estimate of the bias term $\hat{r}_{q,k}$ given by \eqref{def-bqk-2} is subtracted from the $q$-suffix averaged RS iterates, then the distance to the optimal solution of the $q$-suffix averaged iterates becomes on the order of $\bigO(1/k)$ for $0<q<1$ and on the order of $\bigO(\log k/k)$ for $q=1$ with high probability. By strong convexity, this translates into a rate of $\mathcal{\tilde{O}}(1/k^2)$ in the suboptimality of the objective values (where $\mathcal{\tilde{O}}$ ignores the logarithmic terms in $k$ appearing when $q=1$.). We call this ``subtraction operation", \textit{bias removal}. Algorithm DRR describes how this can be implemented. In a practical implementation, the number of cycles can be fixed in advance to a certain number $K$, and the estimation of the bias can be done only once at the last ($K$-th) cycle (see Step $(ii)$ of Algorithm \ref{fastrs}) and then can be subtracted from the averaged iterates. 

\begin{algorithm}[h!]\label{alg:fastrs}
\small
\caption{\textbf{D}e-biased \textbf{Random} \textbf{R}eshuffling (DRR) }\label{fastrs}
\textbf{Input:} Initial point $x_0^0 \in \R^n$, number of cycles $K \in \N$, suffix averaging parameter $q \in (0,1]$, stepsize parameters $R>0$ and $s\in (1/2,1)$. 

\textbf{Initialization:} $\bar{x}_{1,0}=0 \in \R^n$, $\hat{v}_0=0 \in \R^n$, $\bar{\alpha}_{1,0}=0 \in \R$, $\hat{H}_0 = 0 \in \R^{n\times n}$.  
\begin{enumerate}

\item For each cycle $k=0,1,2,\dots, {K-1}$:
\begin{enumerate}
	\item Inner iteration.
	    \begin{itemize} 
			\item [$(i)$] Pick a permutation $\sigma_k$ of $\{1, \dots , m\}$ uniformly at random. 
	 
	 		\item [$(ii)$] For $i=1,2,\dots,m$: 
	 
	 	\quad \quad Compute $x_i^k$ by:  $ x_{i}^k ={x_{i-1}^k - \alpha_k \nabla f_{\sigma_k(i)} (x_{i-1}^k)}, \quad \alpha_k = \frac{R}{(k+1)^s}. $\\
        // Precompute for the bias estimation only for the last cycle \\         	 
	 	 \quad \quad If $k=K-1$, compute $\hat{v}_i$ and $\hat{H}_i$ by : 	   
	 		\beqas 
	 			\hat{v}_{i} = \hat{v}_{i-1} + \nabla^2 f_{\ski}(x_{i-1}^k) \nabla f_{\ski} (x_{i-1}^k)/2, \quad
	 			 \hat{H}_i = \hat{H}_{i-1} + \nabla^2 f_{\ski}(x_{i-1}^k)	
	 		\eeqas
        	\item [$(iii)$] Set outer iterate: $x_0^{k+1} = x_m^k$.
     \end{itemize}
     \item Update the simple average of the iterates and the stepsize:
     		\beqas \bar{x}_{1, k+1}=  \frac{k}{k+1} \bar{x}_{1,k} + \frac{1}{k+1} x_0^{k}, \quad
     		           \bar{\alpha}_{1,k+1} = \frac{k}{k+1} \bar{\alpha}_{1,k} + \frac{1}{k+1} \alpha_k  
     		\eeqas   
\end{enumerate}

\item If $q \in (0,1)$, compute $q$-suffix averages from the simple averages: 
   		 		\beqs \bar{x}_{q,K} = \frac{
   \bar{x}_{1,K} - q \bar{x}_{1,(1-q)K} }{1-q}, \quad \bar{\alpha}_{q,K} = \frac{
   \bar{\alpha}_{1,K} - q \bar{\alpha}_{1,(1-q)K} }{1-q}.
   				\eeqs

\item Estimate the bias by the formula \eqref{def-hat-bqk} : $ \hat{b}_{q,K} = - \bar{\alpha}_{q,K} \hat{H}_m^{-1} \hat{v}_m$ in the last cycle.
\end{enumerate}
\textbf{Output:} $\bar{x}_{q,K} - \hat{b}_{q,K}$.  	
\end{algorithm}

The bias removal of the DRR algorithm requires an $n\times n$ matrix inversion which requires $\approx n^3$ arithmetic operations (if there is more structure on the Hessian of $f_i$ such as low-rankness or sparsity this could be improved to $\approx n^2$), but accelerates the convergence with high-probability. For small or moderate $n$, this could be done efficiently and incrementally processing the functions one at a time; however for large $n$ this may be impractical or infeasible limiting the applicability of this method. Nevertheless, the expensive matrix inversion step does not need to be done at every cycle, it suffices to do it only once at the end of the last cycle. Figure \ref{fig-3} compares the performance of SGD, RR and DRR  methods in terms of the histogram of the distance to the optimal solution (left panel) and suboptimality of the objective function (right panel) on a randomly generated quadratic example with a dense Hessian matrix with parameters $m=50$, $n=20$. For a fair comparison, we run all the algorithms with the same amount of CPU time. In particular, in Figure \ref{fig-3} we run DRR for 0.5 seconds including the bias correction step, and run RR and SGD for the same amount of time. We observe that SGD is consistently performing the worst, whereas DRR leads often to a better solution than RR both in terms of distances to the optimal solution and suboptimality. Figure \ref{fig-4} repeats the experiment with 5 seconds, we see a clearer separation between the histograms of the RR method and the De-biased RR method. We see similar results when we run the algorithms for different amount of times. These results show that the asymptotic performance would get better if one removes the bias term and typically we need more cycles for the bias correction term to be effective. The results also illustrate the results of Theorem \ref{thm-averaged-rate-quad} and \ref{thm-averaged-random-rate} on the biasedness of the RR iterations in the sense that asymptotically an improvement can be obtained by subtracting the bias. 

\section{Conclusion}\label{sec:conclusion}

We analyzed the random reshuffling (RR) method for minimizing a finite sum of convex component functions. When the objective function is strongly convex and the component functions are smooth, averaged RR iterates converge at rate $\sim1/k^{s}$ to the optimal solution almost surely (which translates into a rate of $1/{k^{2s}}$ in the suboptimality of the objective value) for a diminishing stepsize $\alpha_k = \Theta(1/k^s)$ with $s \in (1/2,1)$. This is faster than SGD's $\Omega(\frac{1}{k})$ rate. Viewing RR as a gradient descent method with random gradient errors, this result builds on first showing that gradient errors $E_k$ satisfying $ E_k = \bigO(\alpha_k)$ and then relating the gradient error sequence to an i.i.d sequence to which martingale theory is applicable. Note that the gradient errors in SGD are larger with a $\bigO(1)$ variance, which leads to a less accurate gradient descent direction. Beyond RR and SGD comparison, these results also give insight into the fast convergence properties of without-replacement sampling strategies  compared to with-replacement sampling strategies. 

After characterizing the convergence rate of RR, we look into second-order terms in the asymptotic expansion of the averaged RR iterates and obtain high probability bounds. We use these bounds to develop a new method that can accelerate the convergence rate of RR to $\bigO(\frac{1}{k^2})$ with high probability. Finally, we show that the $\bigO(\frac{1}{k^2})$ rate can also be achieved in expectation (which is a weaker notion of convergence with respect to convergence with high probability) for the $s=1$ case by adjusting the stepsize to the strong convexity constant of the objective properly.
\begin{figure}\hspace{-0.3in}
\vspace{-0.2in}
\begin{center}
	\includegraphics[width=1\linewidth]{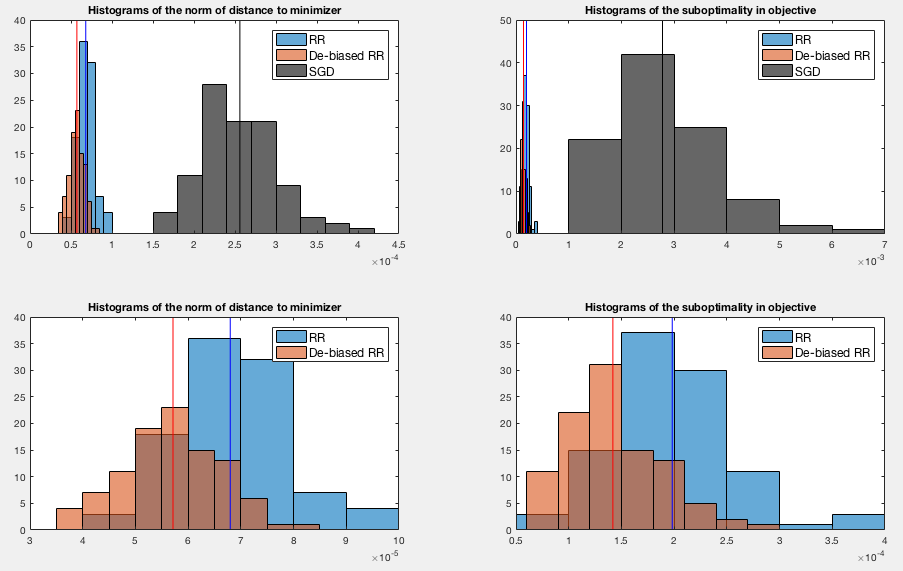}\label{sim-18}\vspace{-0.3in}
\caption{\label{fig-3}Comparison of RR, Debiased-RR (DRR) and SGD when component functions are random quadratics with $m=50$, $n=20$ and with simulation time 0.5 seconds over $500$ sample paths. Top, left: Histograms of $\dist_k$ for RR, DRR and SGD. Bottom, left: Histograms of $\dist_k$ for RR and DRR only (without SGD). Top, right: Histograms of the suboptimality in objective value for RR, DRR and SGD. Bottom, right: Histograms of the suboptimality in objective value for RR and DRR only (without SGD).}	
\vspace{0.3in}
	\includegraphics[width=1\linewidth]{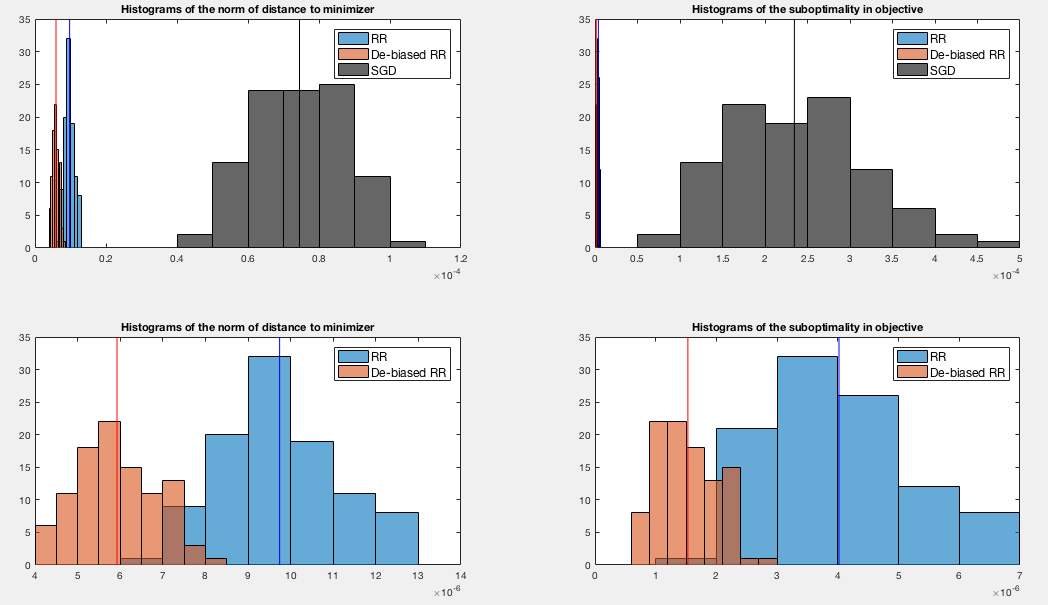}\label{sim-16}
\caption{\label{fig-4}Comparison of RR, De-biased-RR (DRR) and SGD. The simulation framework and parameters are the same as those in Fig. \ref{fig-3} except that the simulation time is 5 seconds instead for each path.}	
\end{center}
\end{figure}

\appendix 

\section{Proof of Theorem \ref{theo-expected-rate}}
\label{sec-appendix-exp-rate}
\begin{proof} Following the analysis of \cite{Gur2015IncGrad}, we could write
 	\beq x_0^{k+1} - x^* =  \big(I - \alpha_k P + \bigO(\alpha_k^3)\big) (x_0^k - x^*) - \alpha_k^2  \hat{\mu}_{\sigma_k} + \bigO(\alpha_k^3)
 	\label{iter-eval-quad-RR}
	\eeq 
where \beq \hat{\mu}_{\sigma_k} := -\underset{1\leq i < j\leq m}{\sum} P_{\sigma_k(j)} \nabla f_{\sigma_k(i)}(x_0^k). \label{def:M-sigma-k} 
\eeq
We also have \beqas
	\| \mu_{\sk} - \hat{\mu}_{\sk} \| &\leq& \underset{1\leq i < j\leq m}{\sum} \| P_{\sigma_k(j)} \| \| \nabla f_{\sigma_k(i)}(x_0^k) - \nabla f_{\sigma_k(i)}(x^*)\| 
								  \leq \underset{1\leq i < j\leq m}{\sum} L_{\sigma_k(j)} L_{\sigma_k(i)} \dist_k = \bigO(\dist_k)
\eeqas
where $\mu_{\sk}$ is defined by \eqref{def-Msigma} with $\sigma=\sigma_k$. Plugging this into \eqref{iter-eval-quad-RR}, 
	\beqas x_0^{k+1} - x^* ~=~ && \big(I - \alpha_k P + \bigO(\alpha_k^2) + \bigO(\alpha_k^3)\big) (x_0^k - 	x^*) - \alpha_k^2 \mu_{\sigma_k} 
		 + \bigO(\alpha_k^3 + \alpha_k^2 \dist_k).
 	\label{iter-eval-quad-RR-2}
	\eeqas 
 Taking norm squares of both sides in \eqref{iter-eval-quad-RR-2}, taking conditional expectations and using the fact that $\mu_{\sigma_k}$ is bounded (see \eqref{def-M-gamma-bound}), we obtain \beqa
	\E_{\sk} \left( \dist_{k+1}^2 \right | x_0^k) ~=~ &&(x_0^k - x^*)^T \big(I - 2\alpha_k P + \bigO(\alpha_k^2)\big) (x_0^k - x^*)  + 2\alpha_k^2 \langle x_0^k - x^*, -\bar{\mu}\rangle \nonumber \\
	 && + \bigO(\alpha_k^3\dist_k + \alpha_k^2 \dist_k^2 + \alpha_k^4) \label{exp-distance-bound}
     \eeqa
where $\E_{\sk}$ denotes the expectation with respect to the random permutation $\sigma_k$ and \[
	\bar{\mu} = \E_{\sk}\left(\mu_{\sigma_k}\right) = \E_{\sigma_1}\left(\mu_{\sigma_1}\right).
\]
It follows from Cauchy-Schwartz that for any $\beta>0$
   \beqas
    	\alpha_k^2 \left\| \langle x_0^k - x^*, -\bar{\mu}\rangle \right\| &\leq& \alpha_k^2 \dist_k \| \bar{\mu} \| 
    	    = \left(\sqrt{\beta} \alpha_k^{1/2}\dist_k\right) \frac{ \alpha_k^{3/2} \| \bar{\mu} \|}{\sqrt{\beta}} \leq \frac{\beta \alpha_k \dist_k^2}{2} + \frac{\alpha_k^3 \| \bar{\mu} \|^2 }{2\beta},
   \eeqas 
and also $$\alpha_k^3 \dist_k = \alpha_k^2 \left(\alpha_k \dist_k\right) \leq \frac{\alpha_k^4}{2} + \frac{\alpha_k^2 \dist_k^2}{2}.$$   
Plugging these bounds back into \eqref{exp-distance-bound}, using the lower bound \eqref{def-Hs} on the Hessian $H_* = P$ and invoking the tower property of the expectations:
	\beqas
	\E \left( \dist_{k+1}^2 \right) ~=~ &&\big(1 - \alpha_k (2c-\beta) + \bigO(\alpha_k^2)\big) \E \left(\dist_k^2\right)  + \alpha_k^3 \frac{\| \bar{\mu} \|^2}{\beta} + \bigO(\alpha_k^4). 
     \eeqas
Plugging in $\alpha_k = R/k^s$, it follows from Chung's lemma \cite[Lemma 4.2]{Fabian1967ChungLemma} that,
	\beq \E \left( \dist_{k+1}^2 \right) \leq \begin{cases} \frac{R^2 \|  \bar{\mu} \|^2}{\beta(2c-\beta)} \frac{1}{k^{2s}} + o\left(\frac{1}{k^{2s}}\right) & \mbox{if} \quad 0<s<1 \mbox{ and } 2c-\beta > 0,\\ \frac{R^3 \|  \bar{\mu} \|^2}{\beta (R\left(2c-\beta)-2\right)} \frac{1}{k^{2}}   + o(\frac{1}{k^{2}})  &  \mbox{if} \quad s=1 \mbox{ and } R(2c-\beta)-2 > 0, \end{cases} 
		\label{exp-dist-square-rate}
	\eeq
Next we choose $\beta$ to get the best upper bound above. This is done by choosing $\beta = c$ for $0<s<1$ and choosing $\beta = (Rc - 1)/R$ for $s=1$ which yields
 	\beq 
 		\E \left( \dist_{k+1}^2 \right) \leq \begin{cases} \frac{R^2 \|  \bar{\mu} \|^2}{c^2} \frac{1}{k^{2s}} + o\left(\frac{1}{k^{2s}}\right) & \mbox{if} \quad 0<s<1,\\ \frac{R^4 \|  \bar{\mu} \|^2}{(Rc-1)^2} \frac{1}{k^{2}}   + o(\frac{1}{k^{2}})  &  \mbox{if} \quad s=1 \mbox{ and } Rc -1 > 0. \end{cases} 
	\eeq	 
By Jensen's inequality, we have $\E(\dist_k) \leq \left( \E \left( \dist_{k+1}^2 \right) \right)^{1/2}$. Therefore, by taking square roots of both sides above in \eqref{exp-dist-square-rate} we conclude.
\end{proof}

\section{Technical lemmas for the proof of Theorem \ref{thm-averaged-rate-quad}} \label{sec:appendix-quad-case}

The first lemma is on characterizing what is the worst-case distance of the all the inner iterates of RR to the optimal solution $x^*$. This quantity we want to upper bound is a random variable, but the upper bounds we obtain are deterministic holding for every sample path. This lemma is based on Corollary \ref{coro-deter-decay-rate-dist} and uses the fact that the distance between the inner iterates are on the order of the stepsize.
 
\begin{lemma}\label{lem-random-tail-bound}Under the conditions of Theorem \ref{thm-averaged-rate-quad} we have 
    $\underset{0 \leq i < m}{\max} \| x_i^k - x^*\| =  \bigO(\frac{1}{k^s}).$
where $\bigO(\cdot)$ hides a constant that depends only on $\Gs, L, m,c$ and $R$. 
\end{lemma}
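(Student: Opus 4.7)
The plan is to bootstrap from Corollary \ref{coro-deter-decay-rate-dist}, which already gives the bound $\|x_0^k - x^*\| = \bigO(1/k^s)$ for the outer iterates with a constant controlled by $R, M_\Gamma, c$, and then propagate it through the $m-1$ inner steps of cycle $k$. Since $M_\Gamma \leq L m \Gs$ by \Cref{def-M-gamma}, this starting bound has a constant depending only on $\Gs, L, m, c, R$, which is exactly the list allowed in the statement.

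For the inner iterates, set $d_i^k := \|x_i^k - x^*\|$. The triangle inequality applied to the update \eqref{inner-update-wo} together with the Lipschitz gradient bound
$$\|\nabla f_{\ski}(x_{i-1}^k)\| \leq \|\nabla f_{\ski}(x^*)\| + L_{\ski}\|x_{i-1}^k - x^*\| \leq \Gs + L\, d_{i-1}^k$$
yields the one-step recursion
$$ d_i^k \leq (1 + \alpha_k L)\, d_{i-1}^k + \alpha_k \Gs, \qquad i = 1,\dots,m-1. $$
This is a deterministic recursion once the realization $\sigma_k$ is fixed, so no conditioning or probability arguments are needed at this stage.

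Unrolling the recursion $m-1$ times gives
$$ d_i^k \leq (1+\alpha_k L)^{m-1} d_0^k \;+\; \alpha_k \Gs \sum_{j=0}^{m-2}(1+\alpha_k L)^j \leq (1+\alpha_k L)^{m-1}\bigl( d_0^k + m \alpha_k \Gs\bigr). $$
Because $\alpha_k = R/(k+1)^s \to 0$ and $m$ is a fixed constant, for $k$ large the factor $(1+\alpha_k L)^{m-1}$ is bounded by, say, $2$ (and in general by $e^{mLR}$ uniformly in $k$), so the factor can be absorbed into the constant. Combining with $d_0^k = \bigO(1/k^s)$ from Corollary \ref{coro-deter-decay-rate-dist} and $\alpha_k = \bigO(1/k^s)$ finishes the proof, yielding a final constant in $\bigO(\cdot)$ that depends only on $\Gs, L, m, c, R$, as required. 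Since every inequality above is pointwise in $\omega$, so is the final bound, matching the (sample-path) meaning of $\bigO(\cdot)$ set up in the notation section.

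There is no serious obstacle here: the only thing to check is that the constant remains independent of $\omega$, which is immediate since the bound on $d_0^k$ from Corollary \ref{coro-deter-decay-rate-dist} is already pointwise and the recursion for $d_i^k$ only uses the worst-case bounds $\Gs$ and $L$ that hold for every permutation.
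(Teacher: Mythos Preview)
Your proof is correct and follows essentially the same approach as the paper: start from Corollary \ref{coro-deter-decay-rate-dist} for the outer iterate, then propagate the bound through the inner steps via the Lipschitz gradient estimate and an induction over $i$. Your explicit one-step recursion $d_i^k \leq (1+\alpha_k L)\,d_{i-1}^k + \alpha_k \Gs$ and its unrolling are in fact a slightly cleaner packaging of the paper's inductive step, which instead bounds $\|x_i^k - x_0^k\|$ by $i\alpha_k \max_{\ell\leq i}\|\nabla f_{\sigma_k(\ell)}(x_{\ell-1}^k)\|$ and then applies the same Lipschitz argument.
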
 
\begin{proof} By Corollary \ref{coro-deter-decay-rate-dist}, 
   \beq \| x_0^k - x^* \|  = \bigO(\frac{1}{k^s}).
   		\label{upper-bound-cycle-begin}
   \eeq  
where $\bigO(\cdot)$ hides a constant that depends only on $\Gs, L, m,R$ and $c$.  We have also for any $0\leq i< m$ and $k\geq 0$,
   \beqas \| x_i^k - x^*\|  &\leq& \| x_0^k - x^*\| + \| x_i^k - x_0^k \|  = \| x_0^k - x^*\| + i\alpha_k \max_{\ell=1,\dots,i} \|\nabla f_{\sigma_k(\ell)} (x_{\ell-1}^k)\| \\
   &\leq& \| x_0^k - x^*\| + (m-1)\frac{R}{(k+1)^s} \big(  G_* + \max_{\ell=1,\dots,i} \|\nabla f_{\sigma_k(\ell)} (x_{\ell-1}^k) - \nabla f_{\sigma_k(\ell)} (x^*)\| \big) \\
   &\leq& \| x_0^k - x^*\| + (m-1)\frac{R}{(k+1)^s} \big(  G_* + L \max_{\ell=1,\dots,i} \|x_{\ell-1}^k - x^*\| \big). 
   \eeqas
where we used the $L$-Lipschitzness of the gradient of $f$ where $L$ is given by \ref{def-L}. Using \eqref{upper-bound-cycle-begin} and applying this inequality inductively for $i=0,1,2,\dots, m-1$ we conclude.
\end{proof}     

The second lemma is on characterizing how fast on average the outer iterates move (if normalized by the stepsize) after a cycle of the RR algorithm. This is clearly related to the magnitude of the gradients seen by the iterates and is fundamental for establishing the convergence rate of the averaged RR iterates in Theorem \ref{thm-averaged-rate-quad}. 

\begin{lemma}\label{lem-I-lk} Under the conditions of Theorem \ref{thm-averaged-rate-quad}, consider the sequence
\beq I_{q,k} = \frac{\sum_{j=(1-q)k}^{k-1} {(x_0^j - x_{0}^{j+1})}{\alpha_j^{-1}}}{qk}, \quad  0 < q \leq 1. \label{def-I-qk}
\eeq
Then, 
    $ I_{q, k} = \begin{cases}  \bigO \big( \frac{\log k}{k}  \big) 
     &  \mbox{if} \quad q=1, \\  
    			\bigO \big( \frac{1}{k}\big) 	 & \mbox{if} \quad 0<q<1.  
    \end{cases}  
   $
In the former case, $\bigO(\cdot)$ hides a constant that depends only on $\Gs, L,m,  c, R, s,q$ and $\dist_0$. In the latter case, the same dependency on the constants occurs except that the dependency on $\dist_0$ can be removed. 
\end{lemma}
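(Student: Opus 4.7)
The plan is to apply summation by parts (Abel transformation) to the defining sum, thereby converting a sum whose terms are individually only $\bigO(1)$ into one whose interior contributions are $\bigO(1/j)$, using the global convergence rate from Corollary \ref{coro-deter-decay-rate-dist}.

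Writing $\beta_j := 1/\alpha_j = (j+1)^s/R$ and using that subtracting the constant $x^*$ from each iterate leaves the telescoping differences unchanged, summation by parts gives
\[
\sum_{j=a}^{b-1}(x_0^j - x_0^{j+1})\beta_j = (x_0^a - x^*)\beta_a - (x_0^b - x^*)\beta_{b-1} + \sum_{j=a+1}^{b-1}(x_0^j - x^*)(\beta_j - \beta_{j-1}),
\]
with $a = (1-q)k$ and $b = k$. Corollary \ref{coro-deter-decay-rate-dist} supplies the uniform-in-$\omega$ bound $\|x_0^j - x^*\| = \bigO(1/j^s)$ (constant depending only on $\Gs, L, m, c, R$), and a mean-value estimate gives $|\beta_j - \beta_{j-1}| = \bigO(j^{s-1})$.

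Each boundary term then satisfies $\|(x_0^j - x^*)\beta_j\| = \bigO(1/j^s) \cdot \bigO(j^s) = \bigO(1)$ for $j = a, b-1$, except in the degenerate case $q=1$ with $a=0$, where the boundary term equals $(x_0^0 - x^*)/\alpha_0$ and thus contributes a term of order $\dist_0/R$. The interior summands are $\bigO(1/j^s) \cdot \bigO(j^{s-1}) = \bigO(1/j)$, and summing yields
\[
\Bigl\|\sum_{j=a+1}^{b-1}(x_0^j - x^*)(\beta_j - \beta_{j-1})\Bigr\| = \bigO\bigl(\log(b/a)\bigr) = \begin{cases} \bigO(\log k) & \text{if } q = 1,\\ \bigO(1) & \text{if } 0 < q < 1, \end{cases}
\]
where in the second case the hidden constant depends on $q$ through $\log\bigl(1/(1-q)\bigr)$ but is independent of $\dist_0$.

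Dividing through by $qk$ then yields the claimed rates. The main subtlety is tracking exactly which constants depend on $\dist_0$: when $q<1$ the lower endpoint $a = (1-q)k$ tends to infinity, so the boundary term at $a$ is controlled by the asymptotic rate from Corollary \ref{coro-deter-decay-rate-dist} rather than by the initial distance, which is precisely why the $\dist_0$-dependence drops out in that regime. The extra $\log k$ factor for $q=1$ is intrinsic to the Abel estimate (the harmonic series $\sum 1/j$ diverges logarithmically), and it is precisely this factor that motivates the use of $q$-suffix averaging with $q<1$ elsewhere in Theorem \ref{thm-averaged-rate-quad}.
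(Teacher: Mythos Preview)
Your proof is correct and follows essentially the same route as the paper: Abel summation (which the paper calls ``integration by parts'') to rewrite the sum, boundary terms controlled via Corollary~\ref{coro-deter-decay-rate-dist} so that $\alpha_j^{-1}\|x_0^j-x^*\|=\bigO(1)$, and the interior sum bounded by a harmonic-type sum yielding $\log k$ for $q=1$ and a $q$-dependent constant for $q<1$. Your tracking of the $\dist_0$ dependence is also the same as the paper's: for $q=1$ the lower boundary term is exactly $\alpha_0^{-1}\dist_0$, whereas for $q<1$ it is absorbed by the asymptotic $\bigO(1)$ bound.
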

\begin{proof}  It follows from integration by parts that for any $\ell<k$, 
\beq -\sum_{j=\ell}^{k-1}{(x_0^j - x_{0}^{j+1})}{\alpha_j^{-1}}  =  \alpha_k^{-1} (x_0^k - x^*) - \alpha_\ell^{-1} (x_0^\ell - x^*) - \sum_{j=\ell}^{k-1} (x_0^{j+1} - x^*)(\alpha_{j+1}^{-1}-\alpha_j^{-1}). \label{sum-of-aver-iters}
\eeq
Next, we investigate the asymptotic behavior of the terms on the right-hand side. A consequence of Corollary \ref{coro-deter-decay-rate-dist} and the inequality \ref{def-M-gamma} is that  
	\beq \alpha_k^{-1} \| x_0^k -x^*\| = \frac{(k+1)^s}{R} \| x_0^k -x^*\| \leq \frac{LmG_*}{c} + o(1) = \bigO(1)
		\label{step-times-dist-bd} 	
	\eeq
and therefore
\beqas 
	| \alpha_{k+1}^{-1} - \alpha_{k}^{-1} |  \| x_0^k -x^*\| &=& \frac{(k+2)^s - (k+1)^s} {(k+1)^s} \alpha_k^{-1}\| x_0^k -x^*\|  
	 = \bigg( \big(1 + \frac{1}{k+1} \big)^s -1 \bigg) \alpha_k^{-1}\| x_0^k -x^*\| \\
	  &\leq &  \frac{s}{k+1}\alpha_k^{-1}\| x_0^k -x^*\|  
	  \leq \frac{sLmG_*}{c} \frac{1}{k+1} + o(\frac{1}{k+1}) = \bigO(\frac{1}{k+1})
\eeqas
where $\bigO(\cdot)$ hides a constant that depends only on $L,\Gs, c, m$ and $s$. Then, setting  $\ell = (1-q)k$ in \eqref{sum-of-aver-iters}, it follows that 
\beqa  \big\| \sum_{j=\ell}^{k-1} {(x_0^j - x_0^{j+1})}{\alpha_j^{-1}} \big\| &\leq&  \|\alpha_k^{-1} (x_0^k - x^*)\| + \| \alpha_{(1-q)k}^{-1} (x_0^{(1-q)k}- x^*)\| \\
  && + \sum_{j=(1-q)k}^{k-1} \| x_0^{j+1} - x^*\| |\alpha_{j+1}^{-1}-\alpha_j^{-1}|.  \nonumber\\
       &=& \bigO(1) + \| \alpha_{(1-q)k}^{-1} (x_0^{(1-q)k} - x^*)\| + \bigO \bigg( \sum_{j=(1-q)k}^{k-1} \frac{1}{j+1}\bigg).  \label{weighted-iter-diff-bound} 
\eeqa
We also have
  \beq  \| \alpha_{(1-q)k}^{-1} (x_0^{(1-q)k} - x^*)\|  = \begin{cases} \alpha_0^{-1} \dist_0 & \mbox{if} \quad q = 1, \\ \bigO(1)  & \mbox{if} \quad 0<q<1, \end{cases} \label{init-averaged-iter-bound}
  \eeq
where the second part follows from \eqref{step-times-dist-bd} with similar constants for the $\bigO(\cdot)$ term. As the sequence $\frac{1}{j+1}$ is monotonically decreasing, for any $k>0$ we have the bounds 
\beq  \sum_{j=(1-q)k}^{k-1} \frac{1}{j+1} \leq \frac{1}{(1-q)k + 1} + \int_{(1-q)k}^{k-1} \frac{1}{x+1} dx \leq \begin{cases} 1 + \log k & \mbox{if } \quad  q=1, \\
 1 + \log(\frac{1}{1-q})  & \mbox{if } \quad  0 <q <1.
\end{cases}  \label{tail-suffix-averaging}
\eeq
Note that when $q=1$ this bound grows with $k$ logarithmically whereas for $q<1$ it does not grow with $k$. 
Then, combining \eqref{weighted-iter-diff-bound}, \eqref{init-averaged-iter-bound} and \eqref{tail-suffix-averaging}  we obtain
  $$ \| \Iqk \| \leq \frac{\big\| \sum_{j=\ell}^{k-1} {(x_0^j - x_0^{j+1})}{\alpha_j^{-1}} \big\|}{qk} = \begin{cases}  \bigO \big(  \frac{\log k}{k}\big) 
     &  \mbox{if} \quad q=1 \\  
    			\bigO \big( \frac{1}{k}\big) 	 & \mbox{if} \quad 0<q<1  
    \end{cases}  $$
as desired which completes the proof.    
\end{proof}

\begin{lemma}\label{lem-computing-mu-bar} Let $\sigma$ be a random permutation of $\{1,2,\dots,m\}$ sampled uniformly over the set of all permutations $\Gamma$ defined by \eqref{def-Gamma-perm-set} and $\mu(\sigma)$ be the vector defined by \eqref{def-Msigma} that depends on $\sigma$. Then, \beq
	\bar{\mu} = \E_{\sigma}\big(\mu({\sigma})\big) = \frac{1}{2} \sum_{i=1}^m P_i \nabla f_i(x^*)
\eeq
where $\E_{\sigma}$ denotes the expectation with respect to the random permutation $\sigma$ and $\bar{mu}$ is defined by \eqref{def-mu-bar}.
\end{lemma}
\begin{proof} For any $i \neq \ell$, the joint distribution of $(\sigma(i), \sigma(\ell))$ is uniform over the set of all (ordered) pairs from $\{1,2,\dots,m\}$. Therefore, for any $i \neq \ell$, 
    \beqas
    \E_\sigma \big[ P_{\sigma(i)} \nabla f_{\sigma(\ell) }(x^*) \big] &=& \sum_{i=1}^m \sum_{i\neq j, j=1}^m \frac{P_i\nabla f_j(x^*)}{m(m-1)} \\
    &=& \frac{\sum_{i=1}^m P_i  \sum_{j=1}^m \nabla f_j(x^*) - \sum_{j=1}^m P_j \nabla f_j(x^*)}{m(m-1)} 
    =  - \frac{\sum_{j=1}^m P_j \nabla f_j(x^*)}{m(m-1)}
    \eeqas 
where we used the fact that $\nabla f(x^*) = \sum_{j=1}^m \nabla f_j(x^*)=0$ by the first order optimality condition. Then, by taking the expectation of \eqref{def-v-sigma-k}, we obtain 
\beqas \E_\sigma (\mu(\sigma)) &=& - \sum_{i=1}^{m} \sum_{\ell=0}^{i-1} \E \big[ P_{\sigma(i)} \nabla f_{\sigma(\ell)}(x^*) \big] 
        =  \sum_{i=1}^{m} \sum_{\ell=0}^{i-1}  \frac{\sum_{j=1}^m P_j \nabla f_j(x^*)}{m(m-1)} = \frac{\sum_{j=1}^m P_j \nabla f_j(x^*)}{2}.
\eeqas	 
which completes the proof.
\end{proof}
\begin{lemma} \label{lem-martingale-variance} Under the conditions of Theorem \ref{thm-averaged-rate-quad},  the following statements are true: 
\begin{itemize} 
   \item [$(i)$] We have  \beq E_k = \alpha_k \mu(\sigma_k)  + \bigO(\alpha_k^2), \quad k\geq 0,  \label{Ek-expression} \eeq 
where $E_k$ is the gradient error defined by \eqref{def-Ek}, $\bigO(\cdot)$ hides a constant that depends only on $\Gs, L,m, R$ and $c$ and
   \beq  \mu(\sk) = - \sum_{i=1}^{m} P_{\ski} \sum_{\ell=1}^{i-1} \nabla f_{\skl}(x^*). 
		\label{def-v-sigma-k}   
   \eeq
is a sequence of i.i.d. variables where the function $\mu(\cdot)$ is defined by \eqref{def-Msigma}. 
   \item [$(ii)$] For any $0 <q\leq 1 $,  $\lim_{k \to \infty} Y_{q,k}=  \bar{\mu}$ a.s. where 
    $ Y_{q,k} = \frac{\sum_{i=(1-q)k}^{k-1} E_j}{\sum_{j=(1-q)k}^{k-1} \alpha_j}.$
   \item [$(iii)$] It holds that 
       \beq  \| \mu(\sigma_k) \| \leq Lm G_*.	
       		\label{def-mu-sk-upper-bound}
       \eeq	
\end{itemize}

\end{lemma}
\begin{proof} 

	\begin{itemize} 
			\item [$(i)$] As component functions are quadratics, \eqref{def-Ek} becomes
\beqas E_k &=& \sum_{i=1}^{m} P_{\ski}(x_{i-1}^k - x_0^k) = - \sum_{i=1}^{m}  P_{\ski}  \alpha_k \sum_{\ell=1}^{i-1} \nabla f_{\skl } (x_{\ell-1}^k). 
 \eeqas
where we can substitute 
 \beq  \nabla f_{ \skl } (x_{\ell-1}^k) =  \nabla f_{ \skl } (x^*)  + P_{\skl} (x_{\ell-1}^k - x^*).
 	\label{quadratic-only-one}
 \eeq
Then an application of Lemma \ref{lem-random-tail-bound}  proves directly the desired result.
   			\item [$(ii)$] We introduce the normalized gradient error sequence $Y_j = E_j / \alpha_j$. By part $(i)$, $Y_j = \mu(\sigma_j) + \bigO(\alpha_j)$ where $\mu(\sigma_j)$ is a sequence of i.i.d. variables. By the strong law of large numbers, we have
   			     \beq \lim_{k \to \infty} \frac{\sum_{j=0}^{k-1} \mu(\sigma_j)}{k} = \E \mu(\sigma_j) = \bar{\mu} \quad \mbox{a.s.}\label{lim-aver-vuj}  \eeq
where the last equality is by the definition of $\bar{\mu}$. Therefore,   			
    			 \beqas \lim_{k \to \infty} \frac{\sum_{j=0}^{k-1} Y_j}{k} = \lim_{k \to \infty} \bigg(\frac{\sum_{j=0}^{k-1} \mu(\sigma_j)}{k} + \frac{\sum_{j=0}^{k-1} \bigO(\alpha_j)}{k}\bigg) = \bar{\mu} \quad \mbox{a.s.} \eeqas
where we used the fact that the second term is negligible as $\sum_{j=0}^{k-1} \alpha_j / k = \bigO(k^{-s}) \to 0$. 
As the average of the sequence $Y_j$ converges almost surely, one can show that this implies almost sure convergence of a weighted average of the sequence $Y_j$ as well as long as weights satisfy certain conditions as $k \to \infty$. In particular, as the sequence $\{\alpha_j\}$ is monotonically decreasing and is non-summable, by \cite[Theorem 1]{Etemadi:2006ko}, 
       \beq \lim_{k \to \infty} Y_{1,k} = \lim_{k \to \infty}\frac{\sum_{j=0}^{k-1} \alpha_j Y_j}{\sum_{j=0}^{k-1} \alpha_j} =  \lim_{k \to \infty} \frac{\sum_{j=0}^{k-1} E_j}{\sum_{j=0}^{k-1} \alpha_j} = \bar{\mu} \quad \mbox{a.s.}\label{lim-average-mu-star}
       \eeq 
This completes the proof for $q=1$. For $0<q<1$, by the definition of $Y_{q,k}$, we can write $Y_{1,k} = (1-w_k) Y_{q,k} + w_k Y_{1, (1-q)k}$ where the non-negative weights $w_k$ satisfy    
$$w_k = \frac{\sum_{j=0}^{(1-q)k - 1 } \alpha_j}{\sum_{j=0}^{k - 1 } \alpha_j } \to_{k \to \infty} (1-q)^{1-s} < 1.$$     
    As both $Y_{1,k}$ and $Y_{1, (1-q)k}$ go to $\bar{\mu}$ a.s. by \eqref{lim-average-mu-star}, it follows that 
$$\lim_{k\to \infty} Y_{q,k} = \lim_{k\to \infty}  \frac{Y_{1,k} - w_k Y_{1,(1-q)k}}{1-w_k} = \bar{\mu}\quad \mbox{a.s}$$ as well for any $0<q<1$. This completes the proof.
	\item [$(iii)$] This is a direct consequence of the triangle inequality applied to the definition \eqref{def-v-sigma-k} with $L_i = \|P_i\|$ and $L=\sum_{i=1}^m L_i$. 
\end{itemize}
\end{proof}

\section{Techical Lemmas for the proof of Theorem \ref{thm-averaged-random-rate}}\label{sec:appendix-general-case}
We first state the following result from \cite{Gur2015IncGrad} which extends Corollary \ref{coro-deter-decay-rate-dist} from quadratics to smooth functions. 

\begin{corollary}\label{coro-deter-decay-rate-dist-smooth} Under the setting of Theorem \ref{thm-averaged-random-rate}, then
		\beqs \  \dist_k \leq \frac{R M}{c}\frac{1}{k^s} + o(\frac{1}{k^s}), 
         \eeqs 
where the right-hand side is a deterministic sequence, $M := Lm \Gs$ and $\Gs$ is defined by \eqref{def-Gs}.
\end{corollary}
\begin{proof} The proof of Corollary \ref{coro-deter-decay-rate-dist} is based on Theorem \ref{theo-rate-quadratics} from \cite{Gur2015IncGrad}. This theorem admit an extension to smooth functions with Lipschitz gradients to (see \cite{Gur2015IncGrad}), therefore by the same reasoning along the lines of Corollary \ref{coro-deter-decay-rate-dist} the result follows.
\end{proof}

\begin{lemma}\label{lem-random-tail-bound-2} Under the conditions of Theorem \ref{thm-averaged-random-rate}, all the conclusions of Lemma \ref{lem-random-tail-bound} remain valid. 
\end{lemma}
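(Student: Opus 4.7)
The plan is to transplant the proof of Lemma \ref{lem-random-tail-bound} essentially word for word, substituting the smooth-function analogue Corollary \ref{coro-deter-decay-rate-dist-smooth} for Corollary \ref{coro-deter-decay-rate-dist}. The only genuine subtlety is that the original argument used a global Lipschitz constant for $\nabla f$, which need not exist in the smooth case; Assumption \ref{assum-iters-bdd} is exactly the device that restores this property on the compact set $\mathcal{X}$ containing all iterates.

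Concretely, first I would invoke Corollary \ref{coro-deter-decay-rate-dist-smooth} to obtain the deterministic bound $\|x_0^k - x^*\| = \bigO(1/k^s)$ on the outer iterates, with the hidden constant depending only on $\Gs, L, m, R, c$. Second, compactness of $\mathcal{X}$ combined with Assumption \ref{Lip-Hessian} makes each $\nabla^2 f_i$ continuous and hence bounded on $\mathcal{X}$, so by the mean value theorem each $\nabla f_i$ is Lipschitz on $\mathcal{X}$ with some constant $\tilde L_i$; taking $\tilde L = \sum_{i=1}^m \tilde L_i$ yields a constant that controls $\nabla f$ along the entire random trajectory (which lies in $\mathcal{X}$ almost surely).

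Third, I would repeat the triangle inequality used in the proof of Lemma \ref{lem-random-tail-bound}:
\[
\|x_i^k - x^*\| \le \|x_0^k - x^*\| + i\alpha_k \max_{1\le \ell \le i} \|\nabla f_{\skl}(x_{\ell-1}^k)\|,
\]
bounding the gradient term by $\Gs + \tilde L \max_{\ell \le i}\|x_{\ell-1}^k - x^*\|$ via Lipschitzness of $\nabla f_{\skl}$ at $x^*$ on $\mathcal{X}$, and then perform a finite induction over $i = 0, 1, \dots, m-1$ using $\alpha_k = \bigO(1/k^s)$ and the base case from the first step. This yields $\max_{0 \le i < m}\|x_i^k - x^*\| = \bigO(1/k^s)$ with a constant depending only on $\Gs, \tilde L, m, R, c$.

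No substantive obstacle arises: Assumption \ref{assum-iters-bdd} is precisely what reduces the smooth case to the quadratic argument already carried out, at the cost of replacing the closed-form Lipschitz constant $L = \sum_i \|P_i\|$ with the analogous constant $\tilde L$ derived from the uniform bound on the component Hessians over $\mathcal{X}$. Since the induction is of fixed length $m$ and each step multiplies the current bound by a factor of order $1 + \bigO(\alpha_k)$, the final constant is finite and independent of $k$.
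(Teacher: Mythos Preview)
Your proposal is correct and follows the same approach as the paper, which simply states that the proof of Lemma \ref{lem-random-tail-bound} applies identically once Corollary \ref{coro-deter-decay-rate-dist} is replaced by Corollary \ref{coro-deter-decay-rate-dist-smooth}. You go further than the paper by explicitly addressing the Lipschitz constant issue via Assumption \ref{assum-iters-bdd}, a detail the paper leaves implicit; this is a legitimate clarification rather than a different argument.
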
 
\begin{proof} The proof of Lemma \ref{lem-random-tail-bound}  
applies identically except that instead of Corollary \ref{coro-deter-decay-rate-dist} we use its extension Corollary \ref{coro-deter-decay-rate-dist-smooth}.
\end{proof}     

\begin{lemma}\label{lem-I-Ik-2} Under the conditions of Theorem \ref{thm-averaged-random-rate}, all the conclusions of Lemma \ref{lem-I-lk} remain valid. 
\end{lemma}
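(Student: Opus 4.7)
The strategy is to verify that the proof of Lemma \ref{lem-I-lk} uses only two facts about the quadratic setting, both of which continue to hold under the assumptions of Theorem \ref{thm-averaged-random-rate}, so that the argument transfers with minimal modification. The first ingredient is the $\bigO(1/k^s)$ pointwise decay of $\dist_k$; in the quadratic case this was Corollary \ref{coro-deter-decay-rate-dist}, and in the smooth case its analogue Corollary \ref{coro-deter-decay-rate-dist-smooth} provides the same rate with a constant depending on $\Gs, L, m, c$ and $R$ alone. The second ingredient is the Lipschitz continuity of $\nabla f$ with constant $L$, used to bound $\max_{\ell}\|\nabla f_{\skl}(x_{\ell-1}^k)\|$ by $G_* + L\cdot \dist$-type quantities inside a cycle; this is already part of Assumption \ref{assump-sum-is-str-cvx} combined with \eqref{def-L} and Assumption \ref{Lip-Hessian} (the latter even gives more than we need).

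First I would write down the Abel summation identity
\beqs
-\sum_{j=\ell}^{k-1}(x_0^j - x_0^{j+1})\alpha_j^{-1} \;=\; \alpha_k^{-1}(x_0^k-x^*) - \alpha_\ell^{-1}(x_0^\ell-x^*) - \sum_{j=\ell}^{k-1}(x_0^{j+1}-x^*)(\alpha_{j+1}^{-1}-\alpha_j^{-1}),
\eeqs
exactly as in Lemma \ref{lem-I-lk}. Then, invoking Corollary \ref{coro-deter-decay-rate-dist-smooth} in place of Corollary \ref{coro-deter-decay-rate-dist}, I would obtain
\beqs
\alpha_k^{-1}\|x_0^k-x^*\| \;\leq\; \frac{LmG_*}{c} + o(1) \;=\; \bigO(1),
\eeqs
which is the analogue of \eqref{step-times-dist-bd} with identical constants. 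The same elementary computation using $(1+1/(k+1))^s - 1 \leq s/(k+1)$ then yields $|\alpha_{k+1}^{-1}-\alpha_k^{-1}|\,\|x_0^k-x^*\| = \bigO(1/(k+1))$.

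Setting $\ell=(1-q)k$ and combining these three bounds gives the analogue of \eqref{weighted-iter-diff-bound}; the boundary term $\alpha_{(1-q)k}^{-1}\|x_0^{(1-q)k}-x^*\|$ is treated exactly as in \eqref{init-averaged-iter-bound}, contributing $\alpha_0^{-1}\dist_0$ when $q=1$ and $\bigO(1)$ otherwise, and the sum $\sum_{j=(1-q)k}^{k-1}1/(j+1)$ is bounded by $\log k + 1$ or a constant depending on whether $q=1$ or $q<1$, as in \eqref{tail-suffix-averaging}. Dividing by $qk$ delivers the stated $\bigO(\log k/k)$ and $\bigO(1/k)$ estimates, with hidden constants depending only on $\Gs, L, m, c, R, s, q$ (and on $\dist_0$ when $q=1$). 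I do not anticipate a genuine obstacle here: the proof of Lemma \ref{lem-I-lk} never used that the component functions were quadratic beyond invoking the global rate result, so the only real work is to make sure Corollary \ref{coro-deter-decay-rate-dist-smooth} is stated with the right hidden constants, which it is.
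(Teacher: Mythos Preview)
Your proposal is correct and matches the paper's approach exactly: the proof of Lemma \ref{lem-I-lk} uses the quadratic structure only through the invocation of Corollary \ref{coro-deter-decay-rate-dist}, so substituting Corollary \ref{coro-deter-decay-rate-dist-smooth} is the sole modification required. (Your ``second ingredient'', the Lipschitz continuity of $\nabla f$ used to bound inner-iterate gradients, is actually not needed in Lemma \ref{lem-I-lk} itself---that bound appears in Lemma \ref{lem-random-tail-bound}, not here---but this over-caution does no harm.)
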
 
\begin{proof} The proof of Lemma \ref{lem-I-lk} applies identically with the only difference that the bound on $\dist_k = \|x_0^k - x^*\|$ is obtained from Corollary \ref{coro-deter-decay-rate-dist-smooth} instead of Corollary \ref{coro-deter-decay-rate-dist}.
\end{proof}

\begin{lemma} \label{lem-martingale-variance-2} Under the conditions of Theorem \ref{thm-averaged-random-rate},  the following statements are true: 
\begin{itemize} 
   \item [$(i)$] We have  \beq E_k = \alpha_k \vb(\sigma_k)  + \bigO(\alpha_k^2), \quad k\geq 0,  \label{Ek-expression-2} \eeq 
where  $\bigO(\cdot)$ hides a constant that depends only on $\Gs, L,m, R,  c$ and $\hlip$ and
   $$ \vb (\sk) = - \sum_{i=0}^{m-1} \nabla^2 f_{\ski} (x^*) \sum_{\ell=0}^{i-1} \nabla f_{\skl}(x^*). $$ 
   \item [$(ii)$]  It holds that   
   \beq
          \|\vb(\s_k)\| \leq LmG_* 
          		\label{v-inf-norm-2}
    \eeq
where    
    \beq      
    \mub := \E \vb (\sk) = {\sum_{i=1}^m \nabla^2 f_i(x^*) \nabla f_i(x^*)}/{2}. \label{def-mu-star-2} 
   \eeq
   \item [$(iii)$] For any $0 <q\leq 1 $,  $\lim_{k \to \infty} Y_{q,k}=  \mub$ with probability one where 
    \beq Y_{q,k} = \frac{\sum_{i=(1-q)k}^{k-1} E_j}{\sum_{j=(1-q)k}^{k-1} \alpha_j}.\	
		\label{def-Yqk-2}    
     \eeq
\end{itemize}  
\end{lemma}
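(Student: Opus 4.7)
The plan is to mirror the structure of Lemma \ref{lem-martingale-variance}, replacing the exact Taylor identity for quadratics with its second-order analogue from \eqref{taylor-of-lip-Hessian}, and showing that the extra correction term is harmlessly absorbed into an $\bigO(\alpha_k^2)$ remainder. The three parts of the proof will be handled in sequence, with part $(iii)$ being an almost verbatim repetition of the argument in Lemma \ref{lem-martingale-variance}$(iii)$ once parts $(i)$ and $(ii)$ are in place.

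For part $(i)$, I would start from the definition \eqref{def-Ek} of $E_k$ as a telescoping sum of gradient differences. The key step is to expand $\nabla f_{\sigma_k(i)}(x_{i-1}^k) - \nabla f_{\sigma_k(i)}(x_0^k)$ using the Hessian Lipschitz property \eqref{taylor-of-lip-Hessian}, which gives
\begin{equation*}
\nabla f_{\ski}(x_{i-1}^k) - \nabla f_{\ski}(x_0^k) = \nabla^2 f_{\ski}(x^*)(x_{i-1}^k - x_0^k) + \bigO\bigl(\|x_{i-1}^k - x^*\|^2 + \|x_0^k - x^*\|^2\bigr),
\end{equation*}
where I used Lipschitz continuity of $\nabla^2 f_{\ski}$ to replace $\nabla^2 f_{\ski}$ at intermediate points by $\nabla^2 f_{\ski}(x^*)$ up to $\bigO(\alpha_k)$ corrections. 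By Lemma \ref{lem-random-tail-bound-2} the remainder terms are $\bigO(\alpha_k^2)$. Next, I substitute the inner-iterate identity
$x_{i-1}^k - x_0^k = -\alpha_k \sum_{\ell=1}^{i-1} \nabla f_{\skl}(x_{\ell-1}^k)$
and expand each $\nabla f_{\skl}(x_{\ell-1}^k) = \nabla f_{\skl}(x^*) + \bigO(\alpha_k)$ using the gradient Lipschitz property together with Lemma \ref{lem-random-tail-bound-2}. Collecting leading-order terms yields $E_k = \alpha_k \bar{v}(\sigma_k) + \bigO(\alpha_k^2)$ with the claimed form of $\bar{v}(\sigma_k)$; the hidden constant depends on $\Gs, L, m, R, c, \hlip$ exactly as advertised.

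For part $(ii)$, the bound \eqref{v-inf-norm-2} is immediate from the triangle inequality applied to the definition of $\bar{v}(\sigma_k)$ using $\|\nabla^2 f_{\ski}(x^*)\| \leq L_{\ski}$, $\|\nabla f_{\skl}(x^*)\| \leq \Gs$, and $\sum_i L_i = L$. For the expectation, I use exchangeability: for any $i \neq \ell$, the ordered pair $(\ski, \skl)$ is uniform over $\{(j,p): j,p \in \{1,\dots,m\}, j\neq p\}$, so
\begin{equation*}
\E\bigl[\nabla^2 f_{\ski}(x^*)\nabla f_{\skl}(x^*)\bigr] = \frac{1}{m(m-1)} \sum_{j\neq p} \nabla^2 f_j(x^*)\nabla f_p(x^*) = -\frac{1}{m(m-1)}\sum_{j=1}^m \nabla^2 f_j(x^*)\nabla f_j(x^*),
\end{equation*}
where the last equality uses the first-order optimality condition $\sum_p \nabla f_p(x^*) = 0$. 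There are $\binom{m}{2}$ pairs $(i,\ell)$ with $\ell < i$ in the defining sum for $\bar{v}(\sigma_k)$, so multiplying gives $\mub = \frac{1}{2}\sum_{j=1}^m \nabla^2 f_j(x^*)\nabla f_j(x^*)$.

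For part $(iii)$, I simply re-run the argument of Lemma \ref{lem-martingale-variance}$(iii)$ with $v(\sigma_j)$ replaced by $\bar{v}(\sigma_j)$. Writing $Y_j = E_j/\alpha_j = \bar{v}(\sigma_j) + \bigO(\alpha_j)$ with $\bar{v}(\sigma_j)$ i.i.d., the strong law of large numbers applied to $\bar{v}(\sigma_j)$ together with $\sum_{j=0}^{k-1} \alpha_j / k \to 0$ gives $\frac{1}{k}\sum_{j=0}^{k-1} Y_j \to \mub$ a.s. Since $\{\alpha_j\}$ is monotone decreasing and non-summable, \cite[Theorem 1]{Etemadi:2006ko} transfers this to the stepsize-weighted average $Y_{1,k} \to \mub$ a.s., and the decomposition $Y_{1,k} = (1-w_k)Y_{q,k} + w_k Y_{1,(1-q)k}$ with $w_k \to (1-q)^{1-s} < 1$ upgrades this to $Y_{q,k} \to \mub$ a.s. for every $0 < q \leq 1$. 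The main obstacle throughout is really only the careful bookkeeping of the second-order Taylor remainder in part $(i)$ — but thanks to Lemma \ref{lem-random-tail-bound-2}, this remainder is uniformly $\bigO(\alpha_k^2)$ and poses no essential difficulty.
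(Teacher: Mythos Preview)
Your proposal is correct and follows essentially the same approach as the paper: Taylor-expand the gradient differences in \eqref{def-Ek} using Hessian Lipschitzness, invoke Lemma \ref{lem-random-tail-bound-2} to control all remainders as $\bigO(\alpha_k^2)$, then replay Lemma \ref{lem-martingale-variance} parts $(ii)$ and $(iii)$ verbatim with $P_i$ replaced by $\nabla^2 f_i(x^*)$. The only cosmetic difference is that the paper first expands $\nabla f_{\ski}(x_{i-1}^k) - \nabla f_{\ski}(x_0^k)$ around $x_0^k$ (obtaining $\nabla^2 f_{\ski}(x_0^k)$) and then shifts the Hessian to $x^*$ via Lipschitzness, whereas you expand both terms around $x^*$ directly and subtract; both routes are equivalent and yield the same $\bigO(\alpha_k^2)$ remainder.
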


\begin{proof} For part $(i)$, first we express $E_k$ using the Taylor expansion and the Hessian Lipschitzness as 
\beqas E_k &=& \sum_{i=1}^{m} \bigg( \nabla^2 f_{\ski}(x_0^k) \bigg) (x_{i-1}^k - x_0^k) + \bigO( \hlip \|x_{i-1}^k - x_0^k \|^2 ). \\
            &=& - \sum_{i=1}^{m} \bigg( \nabla^2 f_{\ski}(x_0^k) \bigg) (x_{i-1}^k - x_0^k) + \bigO \bigg( \alpha_k^2 \hlip \bigg\|  \sum_{\ell=1}^{i-1} \nabla f_{ \skl } (x_{\ell-1}^k)  \bigg\| \bigg)
 \eeqas
 By Lemma \ref{lem-random-tail-bound-2},  we have $\|x_\ell^k - x^*\| = \bigO(\alpha^k)$ with probability one. Then, by the gradient and Hessian Lipschitzness we can substitute above
\beqas \nabla f_{ \skl } (x_{\ell-1}^k) =  \nabla f_{ \skl } (x^*)  + \bigO(\alpha^k), \quad \nabla^2 f_{ \skl } (x_{\ell-1}^k) =  \nabla^2 f_{ \skl } (x^*)  + \bigO(\alpha^k). 
\eeqas
which implies directly Equation \eqref{Ek-expression-2}. The rest of the proof for parts $(ii)$ and $(iii)$ is similar to the proof of Lemma \ref{lem-martingale-variance} and is omitted.
\end{proof}

\bibliographystyle{plain}
\bibliography{rand_shuffling}
\end{document}